\newcommand{\arxiv}[1]{\href{http://arxiv.org/abs/#1}{\texttt{arXiv:#1}}}
\newtheorem{theorem}{Theorem}[section]
\newtheorem{lemma}[theorem]{Lemma}
\newtheorem{definition}[theorem]{Definition}
\newtheorem{corollary}[theorem]{Corollary}
\newcommand{\NN}{\mathbb{N}}
\newcommand{\FF}{\mathbb{F}}
\newcommand{\EE}{\mathbb{E}}
\newcommand{\PP}{\mathbb{P}}
\newcommand{\Tsequence}{t}
\newcommand{\A}{\mathcal{A}}
\newcommand{\cg}[1]{{\color{blue} #1}}
\newcommand{\F}{\mathbb{F}_{q}^{n}}
\newcommand{\xvec}{\boldsymbol{x}}
\newcommand{\zerovec}{\boldsymbol{0}}
\def\nfrac#1#2{{\textstyle\frac{#1}{#2}}}
\def\dfrac#1#2{\lower0.15ex\hbox{\large$\frac{#1}{#2}$}}
\title{Threshold functions for substructures\\ in random subsets of finite vector spaces}
\author{Changhao Chen\thanks{Supported by Australian Research Council Discovery Project 170100786.}\\
\small Department of Mathematics\\[-0.2ex]
\small The Chinese University of Hong Kong\\[-0.2ex]
\small Shatin, N.T., Hong Kong\\
\small \texttt{changhao.chenm@gmail.com}\\
\and
Catherine Greenhill\thanks{Supported by Australian Research Council Discovery Project 140101519.}\\
\small School of Mathematics and Statistics\\[-0.2ex]
\small UNSW Sydney\\[-0.2ex]
\small Sydney NSW 2052, Australia \\[-0.2ex]
\small \texttt{c.greenhill@unsw.edu.au}
}
\date{1 April 2020\\ 
     {\small Mathematics Subject Classifications: 05B25, 60C05}}
\begin{document}

\maketitle

\begin{abstract}
The study of substructures in random objects has a long history, beginning with
Erd\H{o}s and R{\' e}nyi's work on subgraphs of random graphs.
We study the existence of certain substructures in random subsets of 
vector spaces over finite fields. First we provide a general framework which 
can be applied to establish coarse threshold results and prove a limiting 
Poisson distribution at the threshold scale.
%that the number of patterns contained in the random subset has a Poisson distribution when 
%its expected value tends to a finite limit.  
To illustrate our framework we apply our results to $k$-term arithmetic 
progressions, sums, right triangles, parallelograms and affine planes.  
We also find coarse thresholds for the property that a random
subset of a finite vector space is sum-free, or is a Sidon set.
%We give a threshold
%function for the property that a random subset of vectors contains a pattern from a given family,
%%and show that the number of patterns contained in the random subset is asymptotically Poisson
%at the threshold scale.
\vspace*{\baselineskip}

\noindent {\bf Keywords:}\ Finite vector space, threshold property, Poisson distribution
\end{abstract}

\section{Introduction}\label{sec:introduction}

Thresholds for the existence of substructures in random objects have
been well studied in many contexts, for example in
random graphs (see~\cite{Alon,Bo,FK}), random subsets of integers
(see~\cite{RR,RSZ}) and even in
%the above mentioned monographes~\cite{Alon}, \cite{Bo}, \cite{FK}, in the works of  
%random subsets of integers~\cite{RSZ},~\cite{RR}, and even in 
random fractal sets~\cite{SS} where the threshold functions 
are taken with respect to various dimensions of fractal geometry.
%\csg{[Changhao, is that what you meant by ``took'' before?]}
%\cc{[CC: my grammar is wrong, sorry]}
While extremal problems ask for conditions which guarantee
that a certain substructure is always present, the random version
relaxes this to ask that the substructure is present with
probability which tends to 1.

Let $G(n,p)$ denote the binomial random graph with $n$ vertices.
The threshold for the property that $G(n,p)$ 
contains a given graph $G$ as a subgraph
depends on the density of the densest subgraph of $G$.
Formally, define
\[ m(G) = \max\{ \rho(H) \mid  H\subseteq G\}\]
where $\subseteq$ denotes the subgraph relation and $\rho(H) = |E(H)|/|H|$
is the density of $H$.  If $m(G) = \rho(G)$
then $G$ is \emph{balanced}, and if $\rho(H) < \rho(G)$ for all
proper subgraphs $H$ of $G$ then $G$ is \emph{stricly balanced}.
As part of their foundational work on random graphs,
Erd{\H o}s and R{\' e}nyi~\cite{ERII} found the threshold for almost-sure
containment of $G$ as a subgraph in $G(n,p)$, when $G$ is
balanced. This was extended to general graphs by Bollob{\' a}s~\cite{Bo1981},
who showed that if $m(G)\geq 1$ then
\[ \lim_{n\to\infty} \Pr\big(G\subseteq G(n,p)\big) =
  \begin{cases} 0 & \text{ if $p n^{1/m(G)} \to 0$,} \\
              1 & \text{ if $p n^{1/m(G)} \to\infty$.} \end{cases}
\]
Attention then turned to the distribution of the number of copies of
a given graph in $G(n,p)$.
Independently, Bollob{\' a}s~\cite{Bo1981} and 
Karo{\' n}ski and Ruci{\' n}ski~\cite{KR} showed that if $G$ is strictly balanced
then, at the threshold scale (that is, when $p = cn^{-1/m(G)}$
for some constant $c>0$), the number of copies of $G$ in $G(n,p)$
tends to a Poisson distribution.
Ruci{\' n}ski and Vince proved and the converse is true~\cite{RV}.  
If $G$ is balanced but not
strictly balanced then the limiting distribution is normal,
as proved by Ruci{\' n}ski~\cite{R}.
Analogous results (on the threshold for existence, and limiting Poisson
distrubution) for subgraphs of random regular graphs were proved by Kim, Sudakov
and Vu~\cite{KSV}.

Researchers in arithmetic combinatorics have studied the
existence of substructures in random sets of integers. 
Several important structures, such as arithmetic progressions,
sum-free sets and Sidon sets, can be described in terms of solutions
(or non-solutions) of a certain system of linear equations.
Ru{\' e}, Spiegel and Zumalac{\' a}rregui~\cite{RSZ} gave a threshold
function for the existence of solutions to a given system of linear equations
in random subsets of $[n] = \{1,2,\ldots, n\}$.  In their result, the
integer matrix $M$ representing the system of linear equations must have full
rank, at least one solution with all entries positive, and for each
$i\neq j$ there must be a solution $\xvec=(x_1,\ldots, x_m)$ with
$x_i\neq x_j$.  The threshold depends on a parameter $c(M)$ which is
reminiscent of the parameter $m(G)$ arising in the study of subgraphs in
random graphs~\cite{ERII}. 
In particular, they show that almost all subsets of $[n]$ of size
$\omega(n^{1-2/k})$ contain arithmetic progressions of length~$k$.
%and is adapted from the parameter
%used by R{\" o}dl and Ruci{\' n}ski~\cite{RR}.   
Ru{\' e} et al.\ also established a Poisson limiting distribution
for the number of solutions of the linear system
$M\xvec = \zerovec$, at the threshold scale.

%\cc{[CC: I think the Sidon set here is slightly different from the definition of Kohayakawa, Lee, R{\" o}dl and Samotij~\cite{KLRS}. Moreover, due to their definition of Sidon set, it seems that may not have the Corollary 1.4, which I mentioned once in my previous email.]}

A set $S\subseteq \mathbb{N}$ is a \emph{Sidon set} if all sums
of the form $x+y$ with $x\leq y$ and $x,y\in S$ are distinct.
Kohayakawa, Lee, R{\" o}dl and Samotij~\cite{KLRS}
analysed the maximum size of a Sidon set contained in a random set of integers.

Motivated by this literature, we consider threshold functions for the existence 
of certain substructures in random subsets of vector spaces over finite fields.  

\subsection{Notation and our main results}\label{s:statements}

Let $\mathbb{F}_{q}$ denote the finite field with $q$ elements, where $q$ is a 
prime power, and let $\F$ be the $n$-dimensional vector space over this field. 
When $q$ is fixed and $n$ is large, the set $\F$  is known as the 
\emph{finite field model}, and was suggested by Green~\cite{Green} as a
toy model for testing proof techniques in additive number theory.  This gave 
a simplified setting which retained key features of the original problem.
See the survey of Wolf~\cite{Wolf} for more details.

%Our goal is to establish threshold functions for the existence of certain structures, 
%which we call ``patterns'', in random subsets of $\F$. 
%
%We start by describing the random model in $\F$. 
Given $0<\delta<1$, let $E$ be a random subset of $\F$ chosen as follows: 
put $x\in E$ with probability $\delta$ and $x\notin E$ with probability $1-\delta$, 
independently for all $x\in \F$. Denote  the resulting probability space by $\Omega(\F,\delta)$. 
The model $\Omega(\F,\delta)$ is closely related to the Gilbert random graph model $G(n,p)$, 
see for example \cite{FK}, and to random subsets of $[N]=\{1,2,\ldots, N\}$.  
When working with finite fields, it is common to use $p$ to denote
a prime number, rather than a probability.  This is why we use $\delta$
to denote the probability parameter in our model.

%We stress that  $a$ is bounded in all of four patterns of Definition~\ref{def:def} with at least one of $q, n$ tends to infinity which was described in Definition~\ref{def:def}. Under this situation, roughly speaking, any two elements of $\A$ are nearly disjoint (independent) as $q, n$ tends to infinity.  We note that there are some other natural models such that $a$ is not bounded as  $q$ or $n$ tends to infinity, for instance $\A$ is the collection of all the $\lfloor{n/2}\rfloor$-dimensional planes.  However we are not dealing with such a situation when $a$ is not bounded.  

Standard asymptotic notation is given at the start of Section~\ref{sec:pre}.
All asymptotics are as $q+n\rightarrow\infty$ unless otherwise stated.

Let $P$ be a combinatorial property and $E\in \Omega(\F,\delta)$. 
Say that $P$ is a \emph{monotone increasing} property if 
the property is inherited by supersets, and a \emph{monotone decreasing} property
if it is inherited by subsets.
%$\PP(\text{$E$ satisfies property $P$})$
%is an increasing function of $\delta$, and is a \emph{decreasing} property
%if this probability decreases with $\delta$.
Then $t(n,q)$ is a \emph{(coarse) threshold function} for the monotone increasing property $P$ if
the following holds:
\begin{itemize}
\item[(i)] if $\delta = o(t(n,q))$ then $\PP(\text{$E$ satisfies property $P$}) \rightarrow 0$, and 
\item[(ii)] if $\delta = \omega (t(n,q))$ then $\PP(\text{$E$ satisfies property $P$} ) \rightarrow 1$.
\end{itemize}
If $P$ is a decreasing properties then a coarse threshold is defined by  applying
the above definition to the negation of $P$. Most properties we consider involve
containment of a given substructure, so they are increasing properties.
Bollob{\' a}s and Thomason~\cite{BT} proved that every non-trivial monotone 
increasing property of sets has a coarse threshold.

\bigskip

Let $\A=\A_{a}$ be a family of subsets of $\F$ such that each element of $\mathcal{A}$ contains
$a$ vectors, where $a$ is a fixed constant.  
Let $X=X_{\A}(E)$ be the random variable which counts the elements of $\A$ in the random set $E$. 
That is,
\begin{equation}\label{eq:x}
X=X_{\A}(E) =\sum_{T\in \A} {\bf 1}_{E}(T)
\end{equation}
where ${\bf 1}_E(T)$ equals 1 if $T\subseteq E$ and equals 0 otherwise.
We write $|S|$ to denote the cardinality of a set $S$. By linearity of expectation we have 
\begin{equation}
\label{eq:lambdaA}
\EE(X)= |\A|\,\delta^{a}=:\lambda_{\A}. 
\end{equation}
Let Po$(\mu)$ denote the Poisson distribution with mean $\mu$, and write 
$X \xrightarrow{\text{d}} \operatorname{Po}(\mu)$ if $X$ tends in distribution 
to Po$(\mu)$.  
%\cg{Recall $\lambda_{\A}$, defined in (\ref{eq:lambdaA}).}
%In the special case that $\lambda_{\A}$ has a limit $\lambda$, 
%we have the following. 

Our main contribution is the following general theorem which can be used
to locate a coarse threshold for the property ``contains an element of $\A$'',
and to prove that that the number of patterns in $\A$ contained in $E$
has a Poisson distribution when its expected value tends to a finite limit.

%\cc{[CC: changed ``$b, c>0$" to ``$b>0, c\ge 0$", we have $c>0$ for the right triangle, and $c=0$ for other patters]}
\begin{theorem}\label{thm:general}
Suppose there exist integer constants $b>0$, $c\ge 0$ such that $|\A|=\Theta (q^{bn-c})$. 
%\csg{[Should we insist that $b,c$ are integers?]}
%\cc{[CC: added ``integer"]}
%(Note, $b\leq a$.)
Assume that for any set  $S$  of $k$ distinct points in $\mathbb{F}_{q}^{n}$, the number of elements of $\A$ which contain~$S$ is 
\begin{equation}\label{eq:10}
  \begin{cases}
   O(q^{(b-k)n-c}) & \text{if $1\leq k\leq b-1$}, \\
   O(1) &\text{otherwise}.
  \end{cases}
\end{equation}
%Here all asymptotics are as $q+n\rightarrow \infty$, if $b<a$, and are as $q\rightarrow \infty$ if $b=a$.  
Then the event ``contains an element of $\A$'' has a coarse threshold function 
\[
t(n, q)=q^{(c-bn)/a}.
\]
\emph{(}Above, asymptotics are as $q+n\to\infty$.\emph{)}
Furthermore, if 
$\EE(X)\rightarrow \lambda$ for some constant $\lambda>0$ 
then $X \xrightarrow{\text{d}} \operatorname{Po}(\lambda)$,
where asymptotics are as
\[ \begin{cases} q+n\to\infty & \text{ if $b<a$,}\\
                 q\to\infty & \text{ if $b=a$ and $c>0$.}
   \end{cases}
\]
%\csg{I think we only need $q\to\infty$ for the Poisson result, when $b=a$.
%I think the threshold result still works when $b=a$ just assuming $q+n\to\infty$.}
%
%\cc{[CC: Thanks!! I agree with you all the cases with an exceptional that we need $c>0$ for the case $b=a$ and Poission, I also edit it in the proof of Theorem \ref{thm:general}.] }
\end{theorem}

This framework only applies to patterns with a
high degree of regularity, as captured by the condition (\ref{eq:10}).
To generalise Theorem~\ref{thm:general}, it may be necessary to adapt
the concepts of balanced and strictly balanced graphs (discussed
in the introduction) to consider the densest substructures in a given pattern,
in some sense. This is left for future work, see Section~\ref{s:future}.

We will illustrate Theorem~\ref{thm:general} by applying it to %four 
some well-studied patterns described below.
%Our results will be asymptotic as at least one of $q, n$ tends to infinity. 
The parameter which tends to infinity is specified for each pattern.

\begin{definition}\label{def:def}
\emph{
%\vspace*{-\baselineskip}
\begin{itemize}
\item For fixed $k\geq 3$, a  \emph{non-trivial $k$-term arithmetic progression} consists of $k$ distinct vectors 
$$
x, x+v,  \ldots, x+(k-1)v
$$ 
of $\F$.   Asymptotics are as $q+n$ tends to infinity.
\item A \emph{sum} consists of vectors $x,y,z$ (not necessarily distinct) 
such that $x+y=z$. 
%\cc{[CC: very nice to separate the word ``sum" from ``sum-free" to be a definition!! Interesting!]}
\item A \emph{non-trivial parallelogram} consists of four distinct vectors $x_{1}, x_{2}, x_{3}, x_{4}$ of $\F$
such that 
\[  x_{\sigma(1)}-x_{\sigma(2)}=x_{\sigma(4)}-x_{\sigma(3)} 
\]
 for some permutation $\sigma$. Asymptotics are as $q+n$ tends to infinity.
 \item A \emph{non-trivial right triangle} consists of three distinct vectors  $x_{1}, x_{2}, x_{3}$ of $\F$ such that
\[  (x_{\sigma(2)}-x_{\sigma(1)})\cdot (x_{\sigma(3)}-x_{\sigma(1)})=0 \]
 for some permutation $\sigma$.
Asymptotics for the threshold result are as $q+n$ tends to infinity. 
For the Poisson limit result we take asymptotics as $q\to\infty$, with $n\geq 2$. 
(Here $n$ may be bounded or growing.)
\item 
An $m$-\emph{dimensional plane}  is a translation of an
$m$-dimensional subspace, for any $m\in \{1,2,\ldots, n-1\}$, where $n\geq 2$. Each $m$-dimensional plane contains $q^{m}$ vectors. 
Asymptotics are as $n$ tends to infinity, with $q$ and $m$ fixed.
\end{itemize}
}
\label{patterns}
\end{definition}

In the definition of $k$-APs, parallelograms and right triangles, ``non-trivial'' means that the vectors in the pattern are distinct. For ease of exposition we sometimes omit the word ``non-trivial'' when referring to these patterns.
%\cc{We remark that in this paper all the patterns are non-trivial, and we will stress the word no-trivial sometimes.}

Observe that (non-trivial) right triangles cannot be expressed as the solution of
a system of linear equations.  Similarly, $m$-dimensional planes do not have an
analogue in the setting of random subsets of integers.

Furstenberg and Katznelson~\cite{FKz} proved that there is a function
$Q(\varepsilon,q,m)$ defined for all positive integers $m$, prime powers $q$ and 
$\varepsilon>0$, such 
that if $n > Q(\varepsilon,q)$ and $S\subseteq \F$ satisfies $|S|\geq \varepsilon q^n$
then $S$ contains an $m$-dimensional affine subspace. 
One of our applications, below, is a threshold result for the appearance
of $m$-dimensional affine planes in random subgraphs of $\F$.
%\csg{\underline{Question for Changhao:}\ Are there extremal results in $\F$ for the other patterns too?  If so, perhaps we should cite them.}

%\cc{[CC: added the following, their state-of-art is the following: Any ``Furstenberg set (a set satisfies some structure)" has Cardinality at least $q^s$ for some positive $s$. An open problem is finding  the maximal $s$. There is a conjecture in this direction, and remains open].}

\begin{theorem}\label{thm:main} 
Let $\A$ be one of the collections of %\cc{the collections of non-trivial}  
patterns defined in Definition~\ref{patterns}. 
The functions  
\begin{equation} 
t(n,q)=
  \begin{cases}
   q^{-2n/k} & \text{$k$-APs}, \\
   q^{-2n/3} & \text{sums}, \\
   q^{-3n/4} &\text{parallelograms},\\
   q^{-n+1/3} & \text{right triangles},\\
   q^{-(m+1)n/q^{m}} & \text{$m$-dimensional planes}
  \end{cases}
\end{equation}
 are threshold functions for the property ``$E$ contains an element of $\A$". 
Furthermore, if $\lambda_{\A}\to\lambda$ where $\lambda\in(0,\infty)$ then
$X \xrightarrow{\text{d}} \operatorname{Po}(\lambda)$, where $X$ is the
number of elements of $\A$ contained in the random set $E$, and $\lambda_{\A}$
is defined in \emph{(\ref{eq:lambdaA})}.
All asymptotics are taken 
with the respect to the limits described in Definition~\emph{\ref{patterns}}.
%as $q+n\to\infty$, except for the result
%about the Poisson limit of the number of right triangles, which requires
%the additional assumption $q\to\infty$.
%%distribution result (when $\lambda_{\A}$ tends to a constant),  
\end{theorem}

%A set $S\subseteq \mathbb{N}$ is a \emph{Sidon set} if all sums
%$x+y$ of distinct elements of $S$ are distinct. 
%Analogously, a set $S\subseteq \mathbb{F}_q^n$ is a Sidon set if it
%contains no non-trivial parallelograms.  Hence 

A set $A\subseteq \F$  is \emph{sum-free} if there are no elements $x,y,z\in \F$
with $x+y=z$. 
A \emph{Sidon set} in $\F$ is a subset $A\subseteq \F$ 
such that sums of elements of $A$ are unique, up to permutation of the 
summands.  That is, in a Sidon set $A$, 
if $x+y=z+w$ for $x,y,z,w\in A$ then $(x,y) = (z,w)$ or $(x,y) = (w,z)$.
The next two results follow easily from Theorem~\ref{thm:main}
(indeed, Corollary~\ref{cor:sum-free} is immediate).

\begin{corollary}
\label{cor:sum-free}
The function $t(n,q) = q^{-2n/3}$ is a coarse threshold for the
(decreasing) property that $E$ is sum-free, as $q+n\to\infty$.
\end{corollary}

%On the other hand, a subset 
%$S\subseteq \F$ contains a non-trivial parallelogram if and only if some element of $\F$ can be
%expressed as a sum of elements of $S$ in more than one way.  So a \emph{Sidon set}
%in $\F$ is simply a set which does not contain a non-trivial parallelogram.

%\cc{[CC: Sorry, I am not quite follow the argument about the Sidon sets here]}
%Theorem~\ref{thm:main} also implies the following.

%\cc{[CC:Take the first sentence out from the corollary, what do you think?]}

\begin{corollary}
The function $t(n,q) = q^{-3n/4}$ is a coarse threshold for the
(decreasing) property that $E$ is a Sidon set, as $q+n\to\infty$.  
\end{corollary}

\begin{proof} %\cc{[CC:Nice proof!!]}
Observe that $E$ is a Sidon set if and only if it contains no
non-trivial parallelograms and no non-trivial 3-APs.  
If $\delta = o(q^{-3n/4})$ then also $\delta = o(q^{-2n/3})$, 
so the probability that $E$ contains a non-trivial parallelogram or 
non-trivial 3-AP tends to zero.  That is, the probability
that $E$ is a Sidon set tends to~1 when $\delta =o(q^{-3n/4})$.  Conversely,
if $\delta = \omega(q^{-3n/4})$ then $E$ contains a non-trivial parallelogram
with probability which tends to~1, so the probability that $E$ is a Sidon set
tends to zero.
\end{proof}

We note that there are many other interesting patterns in $\F$, such as Kakeya sets~\cite{Dvir} and Furstenberg sets~\cite{EE}. 

%\csg{[\underline{Question for Changhao}: does that mean that we \emph{can} say
%something about sum-free sets?  The coarse threshold for appearance of triples
%$(x,y,z)$ with $x+y=z$ is the same as the coarse threshold for the decreasing
%property ``is sum-free'', since these are complementary properties.
%It seems that $|\mathcal{A}| = O(q^{2n})$ and the number of sums involving
%a given $x\in\mathbb{F}^q$ is $\Theta(q^n)$, so the parameters are $a=3$, $b=2$, 
%$c=0$ (same as 3-APs).    Have I missed anything?  Then the threshold function
%would be $t(n,q) = q^{-2n/3}$.
%%
%If that's all true then we could add this in.  But on the other hand, $\mathbb{F}_q^n$
%is an additive abelian group, isn't it?  There seem to be lots of papers on 
%sum-free sets in additive abelian groups. Maybe this result is already known?]}
%
%\cc{[CC: Very nice!! I agree with you that the coarse threshold function for the sum-free set is the same as 3-AP. The space $\mathbb{F}_q^n$ is an additive abelian group. You are right that there are a lot of works on ``sum-free set". I take rough search on this direction, but I did not find one which is exactly as our situation. The nearest model to us is \cite{RSZ}].}
%
\bigskip

%\begin{theorem}\label{thm:Poisson}
%Let $\A$ be the one of the patterns defined in Definition~\ref{patterns}. 
%Suppose that $\lambda_{ \A}\rightarrow \lambda$ where $\lambda\in (0,\infty)$. 
%Then $X \xrightarrow{\text{d}} \operatorname{Po}(\lambda)$. 
%All asymptotics are taken with  respect to the limits described in Definition~\emph{\ref{patterns}}.
%\end{theorem}

%Note that $\lambda_{\A} = \lambda(1+o(1))$ if and only if $\delta = (1+o(1))\big(\lambda/|\A|\big)^{1/a}$.

Our proofs use standard arguments, namely the first and second moment method
and the method of moments.

We will show that if $\delta= o(t(n,q))$ then $\EE(X_{\A})$ tends to zero. 
The negative side of the threshold result then
follows directly from Markov's inequality (first moment method),  since
\begin{equation}
\label{eq:first}
\PP(X_{\A}\geq 1)\leq \EE(X_{\A}).
\end{equation}
The positive side of the threshold result %Theorem \ref{thm:main} 
is completed using the second moment method.
Finally, the proof of Theorem~\ref{thm:main} is completed by studying
the higher order moments of the random variable $X_{\A}$ 
and applying the method of moments.

%For Theorem \ref{thm:main} $(2), (3)$, we need estimate the size of the intersections of elements of $\A$.
 
The rest of the paper is organised as follows.  First we finish this section by discussing
some other related work.  In Section~\ref{sec:pre} we introduce some notation, 
then state conditions which are sufficient for the second moment and method of moments arguments.
%Theorems~\ref{thm:main} and~\ref{thm:Poisson}. 
%
We prove our general theorem, Theorem~\ref{thm:general}, in Section~\ref{sec:general}.
Then in Section~\ref{s:applications} we 
apply Theorem~\ref{thm:general} to prove Theorem~\ref{thm:main}
for $k$-APs, sums, right triangles, and parallelograms.
The case of $m$-dimensional planes requires special care and is treated separately in 
Section~\ref{sec:planes}. 
Section~\ref{sec:further} contains some further remarks, including applications to
extremal problems and directions for future research.

\subsection{Other related work}\label{s:related}

There are Ramsey-theoretic versions of the problem of existence
of a given substructure in a random combinatorial object.
For example, graph $\Gamma$ is $(G,k)$-Ramsey if there is a monochromatic copy of $G$
in any colouring of the edges of $\Gamma$ with $k$ colours.
R{\" o}dl and Ruci{\' n}ski~\cite{RR-AMS} proved that if $G$ is not
a forest of stars and paths then there are constants $c,C>0$ such that
\[ \lim_{n\to\infty} \Pr(G(n,p) \text{ is $(G,k)$-Ramsey})
  = \begin{cases} 0 & \text{ if $p < cn^{-1/m_2(G)}$,}\\ 
   1 & \text{ if $p>Cn^{-1/m_2(G)}$}\end{cases}
\]
where 
\[ m_2(G) = \max\left\{ \frac{|E(H)|-1}{|H|-2} \, :\, H\subseteq G\right\}.\]

Let $[n]_p$ denote the random subset of $[n]$, where each element is chosen
independent with probability $p$.  Schur's Theorem says that for every $r\geq 2$ and for $n$ sufficiently
large, in any $r$-colouring of $[n]$ there must be a monochromatic solution
to $x+y=z$.  A solution with $x\neq y$ is called a Schur triple.
Graham, R{\" o}dl and Ruci{\' n}ski~\cite{GRR}  proved that the threshold
for any 2-colouring of $[n]_p$ to have a monochromatic Schur triple is
$p = n^{-1/2}$, and gave a lower bound on the number of Schur triples
when $pn^{1/2}\to\infty$.

Given an $\ell\times k$ matrix $A=(a_{ij})$ of integers, say that $A$
is \emph{partition regular} if for any finite colouring of $\mathbb{N}$
there is a monochromatic solution to the system of linear equations 
$A\xvec = \zerovec$. We may assume that $A$ has full rank and that it
is \emph{irredundant}, which means that for each $i\neq j$ there
is a solution $\xvec = (x_1,\ldots, x_k)$ with $x_i\neq x_j$.
An irredundant matrix $A$ is \emph{density regular} if 
$\xvec=(1,1,\ldots, 1)$ is a solution of $A\xvec=\zerovec$.
R{\" o}dl and Ruci{\' n}ski~\cite{RR} proved a threshold result
for the existence of monochromatic solutions to $A\xvec = \zerovec$
in $[n]_p$, given any colouring of $[n]$ using $r$ colours,
when $A$ is an irredundant, density regular integer matrix $A$. 
The threshold is determined by a parameter $m_A$ which is a maximum
over all sets of columns of a function involving the rank of the
submatrix induced by the chosen columns. They conjectured that the
same threshold result should hold for any partition regular system
of equations (without the density-regular condition).

Similarly, many researchers have considered ``sparse random'' versions of
extremal problems.
Kohayakawa, {\L}uczak and Ruci{\' n}ski~\cite{KLR} studied 3-APs in $[n]_p$.
Given $0 < \alpha < 1$, a subset $A\subset [n]$ is called \emph{$\alpha$-3AP} 
if every subset $F\subseteq A$ with $|F|\geq \alpha|A|$ contains a three-term 
arithmetic progression.  Kohayakawa et al.~\cite{KLR} found a threshold for the 
probability that $[n]_p$ is $\alpha$-3AP, using the Szemer{\' e}di regularity lemma.

Conlon and Gowers~\cite{CG}, Schacht~\cite{S} and Friedgut, R{\" o}dl and 
Schacht~\cite{FGS} presented powerful methods for attacking sparse random versions 
of Ramsey-theoretic and extremal combinatorial questions. 
Several conjectured results were established in these works.
In particular, Conlon and Gowers~\cite{CG} and independently, Schacht~\cite{S}
determined the threshold for Szemer{\' e}di's theorem in random subsets
of the integers, and Friedgut et al~\cite{FGS} proved a threshold result for
Rado's theorem on solutions of partition regular systems of equations in random
subsets of integers, proving the conjecture made by R{\" o}dl and Ruci{\' n}ski~\cite{RR}.

Recently there has been a growing interest in studying finite field versions of some classical 
problems arising from Euclidean spaces. For instance, there are finite field  Besicovitch-Kakeya 
sets~\cite{Dvir,Green,Wolff}; finite field Erd\H{o}s--Falconer distance problem~\cite{IosevichRudnev} %, \cite{Tao1}; 
and finite fields version of restriction problems of Fourier analysis~\cite{M-T}.  
This was an additional motivation for our study.
From another point of view, $\Omega(\F,\delta)$ can be considered as a discrete version of 
fractal percolation (Mandelbrot percolation), see \cite[Chapter 15]{Falconer}.  
For some applications of $\Omega(\F, \delta)$ to deterministic problems, we refer 
to~\cite[Theorem 5.2]{Babai} and~\cite{Chens} for finding subsets of $\F$ with small Fourier 
coefficients, and \cite[Theorems~1.5 and~1.6]{Chenp} for finding sets without exceptional 
projections.

\section{Preliminaries}\label{sec:pre}

The following standard asymptotic notation will be used,  assuming that asymptotics are taken with
respect to a parameter $N$.
Write ``a.a.s.'' to mean \emph{asymptotically almost surely},
which means that the property holds with probability which tends to $1$. 
We write $f=O(g)$ if there is a positive constant $C$ such that $|f(N)|\leq C|g(N)|$ for all $N$ sufficiently large,
and write $f=\Omega(g)$ if $g=O(f)$. 
If $f=O(g)$ and $f=\Omega(g)$ then $f=\Theta(g)$.
We write $f = o(g)$ if $f(N)/g(N)\rightarrow 0$, and write 
$f = \omega(g)$ if $g=o(f)$. Finally, $f\sim g$ means that $f = g(1+o(1))$.

We also use $f=O_{L}(g)$ to mean that $|f(N)|\leq C|g(N)|$ for all $N$ sufficiently large, where $L$ is a list of parameters and $C$ is a positive constant which depends on $L$.  

The parameter $\lambda$ always denotes a fixed positive real number.

Recall that  $\A=\A_{a}$ is a family of subsets of $\F$ such that every element of $\A$ contains $a$ points.  Our asymptotics are with respect to
some function $N=N(q,n)$ which tends to infinity.
In the calculations for the second moment, we consider intersections of pairs of elements of $\A$.
For $k=0,1,\ldots, a$, define   
\begin{equation}\label{eq:ik}
I_{k}=I_{\A, k}=\{(T,T'): T, T' \in \A, \, |T \cap T'|=k\}.
\end{equation}
Let $Y=Y_{\A}$ be the random variable which counts pairs $(T,T')$ of distinct elements of $\A$
with non-empty intersection 
such that both $T$, $T'$ are contained in the random set $E$. That is,
\begin{align}\label{eq:y}
Y=Y_{\A}=\sum_{\substack{T,T'\in \A\\
1\leq |T\cap T'|\leq a-1}} {\bf 1}_{E}(T\cup T')
 &=\sum_{k=1}^{a-1}\sum_{(T,T')\in I_{k}}{\bf 1}_{E}(T\cup T').
\end{align}

Recall the definition of $X$ from (\ref{eq:x}).

\begin{lemma}[Second moment] \label{lem:second}
Suppose that $\EE(X)=|\A| \delta^{a}\rightarrow \infty$ with respect to some $N=N(q, n)\rightarrow \infty$, and that 
\begin{itemize}
\item[\emph{(C1)}] $|I_{0}|\sim |\A|^{2}$;
\item[\emph{(C2)}] $\EE(Y) = o(\EE(X)^{2})$. 
\end{itemize}
Then a.a.s. $E$ contains some element of $\A$. 
\end{lemma}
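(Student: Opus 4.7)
The plan is to apply the standard second moment method in the form of the Paley--Zygmund inequality
\[\PP(X \geq 1) \geq \frac{\EE(X)^2}{\EE(X^2)},\]
so it suffices to prove $\EE(X^2) \sim \EE(X)^2$; this forces $\PP(X \geq 1) \to 1$, which is exactly the desired a.a.s.\ conclusion.

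The first step is to expand the second moment. Using $X = \sum_{T \in \A} \mathbf{1}_E(T)$ and $\EE(\mathbf{1}_E(T)\mathbf{1}_E(T')) = \delta^{|T \cup T'|}$, I would partition the sum over ordered pairs $(T, T')$ according to the intersection size $k = |T \cap T'|$, using the sets $I_k$ from (\ref{eq:ik}). Since $|T \cup T'| = 2a - k$, this produces the clean decomposition
\[\EE(X^2) = |I_0|\,\delta^{2a} + \sum_{k=1}^{a-1} |I_k|\,\delta^{2a - k} + |I_a|\,\delta^{a} = |I_0|\,\delta^{2a} + \EE(Y) + \EE(X),\]
where the third term captures the diagonal $T = T'$ via $|I_a| = |\A|$, and the middle block is precisely $\EE(Y)$ by the definition in (\ref{eq:y}).

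It then remains to compare each piece with the target $\EE(X)^2 = |\A|^2 \delta^{2a}$. Condition (C1) immediately yields $|I_0|\,\delta^{2a} \sim \EE(X)^2$. Condition (C2) handles the middle block, giving $\EE(Y) = o(\EE(X)^2)$. Finally, the hypothesis $\EE(X) \to \infty$ gives $\EE(X) = o(\EE(X)^2)$, so the diagonal contribution is negligible. Combining these three estimates shows $\EE(X^2) \sim \EE(X)^2$, which completes the proof.

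The argument is essentially bookkeeping once the partition by intersection size is written down, so there is no serious obstacle. The one point worth flagging is the diagonal: when one squares $X$, the $T = T'$ contribution is only of order $\EE(X)$, not $\EE(X)^2$, and one must separately observe that this is automatically absorbed into the error because $\EE(X) \to \infty$. Note also that condition (C1) is being used here for an asymptotic equality; a weaker bound such as $|I_0| \leq |\A|^2$ would suffice if one only wanted an upper bound on the variance, as in the Chebyshev formulation of the same argument.
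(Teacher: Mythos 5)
Your proposal is correct and follows essentially the same route as the paper: both decompose $\EE(X^2)$ into the $I_0$ term, the diagonal term $|I_a|\,\delta^a=\EE(X)$, and $\EE(Y)$, use (C1), (C2) and $\EE(X)\to\infty$ to conclude $\EE(X^2)\sim\EE(X)^2$, and finish with Paley--Zygmund. No differences worth noting.
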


\begin{proof}
Observe that $|I_a| = |\A|$, and hence that $|I_a|\delta^a = \EE(X) = o(\EE(X)^2)$.  Therefore, using (C1) and (C2) we have
\begin{align*}
\EE(X^{2})&=\EE \Big(\sum_{(T, T') \in \A\times\A}{\bf 1}_{E}(T) {\bf 1}_{E}(T') \Big)\\
&=|I_{0}|\,\delta^{2a}+|I_{a}|\,\delta^{a}+\EE(Y)\\
&\sim \EE(X)^{2}.
\end{align*}
Hence, by the Paley-Zygmund inequality,
\[
\PP(X>0)\geq \frac{\EE(X)^{2}}{\EE(X^{2})}\rightarrow 1,
\]
completing the proof.  
\end{proof}

Now suppose that $\EE(X)\rightarrow \lambda$ where $\lambda \in (0, \infty)$. 
We consider the factorial moment $\EE((X)_{r})$ of the random variable $X$ for positive integers $r$. Here 
\[
(X)_{r}:=X(X-1)\ldots(X-r+1).
\]
We will show that $\EE((X)_{r})\rightarrow \lambda^{r}$ for all $r\in \NN$. 
From this, the method of moments implies that $X$ converges in distribution to a
Poisson distribution with parameter $\lambda$.

\begin{lemma}{\rm(Bollobas \cite[Theorem 1.22]{Bo})}
Let $\mu$ be a positive real constant.
If for all  $r\in\mathbb{N}$ the factorial moment $\EE((X)_r)\rightarrow \mu^r$ with respect to some $N=N(q, n)\rightarrow \infty$, then $X$ converges in
distribution to a Poisson distribution with mean $\mu$.
\label{method-moments}
\end{lemma}
%\begin{proof}
%Let $Z\sim \operatorname{Po}(\mu)$.  
%It is well known that the Poisson distribution $Z$ is uniquely determined by its moments. 
%Now 
%$\EE((X)_{r})\rightarrow \mu^r = \EE((Z)_{r})$ for any $r\in \NN$. 
%This implies that $\EE(X^{r})\rightarrow \EE(Z^{r})$ for any $r\in \NN$.
%The proof is completed by applying the method of moments
%(see for example~\cite[Section 30]{Bi} or~\cite[Chapter 20]{KF}). 
%\end{proof}

Recall the definition of $I_k$ from (\ref{eq:ik}) and define 
\[ I_{\geq 1}=\bigcup_{k=1}^{a}I_{k}. \]
The following lemma can be used to check that the conditions
of Lemma~\ref{method-moments} hold.

\begin{lemma}[Higher moments] \label{lem:high}
Assume that $|\A|\rightarrow\infty$ with respect to some $N=N(q, n)\rightarrow \infty$ and let $X=X_{\A}$ be as defined in (\ref{eq:x}). 
Suppose that $\EE(X)\rightarrow \lambda$ where $\lambda$ is a positive real number, 
and that the following conditions hold:
\begin{itemize}
\item[\emph{(C1)}] $| I_{0}|\sim|\A|^{2}$; 
\item[\emph{(C2)}] $\EE(Y)=o(\EE(X)^{2})$; 
\item[\emph{(C3)}] $\EE(X^{r})=O_{r}(1)$ for all fixed integers $r\geq 2$.
\end{itemize}
Then $\EE((X)_r)\rightarrow \lambda^r$ for all $r\in\mathbb{N}$. 
%Note that the limits were taken under the same condition.
\end{lemma}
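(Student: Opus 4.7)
The plan is to induct on $r$, using the decomposition
\[\EE((X)_r) = \sum_{\substack{(T_1,\ldots,T_r) \in \A^{r} \\ \text{pairwise distinct}}} \delta^{|T_1 \cup \cdots \cup T_r|} = D_r\,\delta^{ra} + U_r,\]
where $D_r$ is the number of ordered $r$-tuples of pairwise disjoint distinct elements of $\A$, and $U_r$ is the contribution of tuples that contain at least one intersecting pair. The case $r = 1$ is immediate, and $r = 2$ is essentially contained in Lemma~\ref{lem:second}: one has $\EE((X)_2) = |I_0|\delta^{2a} + \EE(Y) \to \lambda^2 + 0$, using (C1) and (C2).

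For the main term, iterating (C1), the number of distinct $r$-tuples that possess an intersecting pair is at most $\binom{r}{2}(|\A|^{2} - |I_0|)\,|\A|^{r-2} = o(|\A|^{r})$, so $D_r \sim |\A|^{r}$ and hence $D_r\,\delta^{ra} \sim \EE(X)^{r} \to \lambda^{r}$. For the error term, I would partition the non-disjoint tuples by their intersection graph $G$ on $\{1,\ldots,r\}$ (with edge $ij$ whenever $T_i \cap T_j \neq \emptyset$) and show $N(G) = o(1)$ for each $G$ with $E(G) \neq \emptyset$; since only finitely many such $G$ arise for fixed $r$, this yields $U_r = o(1)$. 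The key structural observation is that $T_i$'s in distinct connected components of $G$ must be pairwise disjoint, so $|T_1 \cup \cdots \cup T_r|$ decomposes as $\sum_j |\bigcup_{i \in C_j} T_i|$ and the contribution factorises as a product over components. Each singleton component contributes a factor at most $|\A|\delta^a = \EE(X) = O(1)$; each component of size $c \geq 2$ contains an edge, which can be isolated to produce a factor of $\EE(Y) = o(1)$ via (C2), while the remaining $c - 2$ elements are controlled by (C3) through a bound of the shape $\EE(X^{c-2}) = O_r(1)$. Since every $G$ with $E(G) \neq \emptyset$ has at least one non-singleton component, each such $N(G)$ carries an $o(1)$ factor.

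The main obstacle is making the factor bound for a component of size $c \geq 3$ rigorous. The naive estimate $\delta^{|\bigcup_{i \in C} T_i|} \leq \delta^{|T_i \cup T_j|}$ after isolating an edge $\{i,j\} \in G|_C$ throws away the $\delta^{a(c-2)}$ mass carried by the remaining $T_k$'s, producing a bound too large by a factor of $\delta^{-a(c-2)}$. The remedy is to sum first over the ``outside'' vertices $k \in C\setminus\{i,j\}$ using uniform control of $\sum_{T \in \A}\delta^{|T \setminus U|}$ for $U$ of bounded size, splitting according to $|T \cap U|$; the required smallness of these tail sums is extractable from (C3), or equivalently from inductive bounds on $\EE((X)_s)$ for $s < r$. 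Once this technical bound is in place, assembling the component-wise estimates gives $U_r = o(1)$ and thus $\EE((X)_r) \to \lambda^r$, completing the induction.
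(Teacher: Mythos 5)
Your treatment of the main term (disjoint tuples) coincides with the paper's: by (C1) the number of $r$-tuples containing an intersecting pair is at most $\binom{r}{2}|I_{\geq 1}|\,|\A|^{r-2}=o(|\A|^{r})$, so the disjoint tuples contribute $\sim|(\A)_r|\,\delta^{ar}\to\lambda^{r}$. The gap is in the error term. You propose to factorise over connected components of the intersection graph and to control a component of size $c\geq 3$ by isolating one intersecting pair (an $\EE(Y)$ factor) and then summing out the remaining $c-2$ elements via ``uniform control of $\sum_{T\in\A}\delta^{|T\setminus U|}$ for $U$ of bounded size,'' which you assert is extractable from (C3). It is not. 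Condition (C3) bounds a global average, $\EE(X^{r})=\sum_{(T_1,\dots,T_r)\in\A^r}\delta^{|T_1\cup\cdots\cup T_r|}=O_r(1)$, and this does not yield a bound on $\sum_{T\in\A}\delta^{|T\setminus U|}$ that is uniform over the fixed sets $U$ arising as unions of boundedly many elements of $\A$: a single exceptional $U$ with, say, $\Theta(|\A|)$ elements $T$ meeting it in exactly one point forces $\sum_{T}\delta^{|T\setminus U|}\geq|J_1|\delta^{a-1}=\Theta(\delta^{-1})\to\infty$, yet contributes only a negligible amount to every moment $\EE(X^r)$ and to $|I_{\geq 1}|$, so (C1)--(C3) can all still hold. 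Such uniform conditional bounds are precisely what the paper proves \emph{separately for each pattern} (in Lemma~\ref{lem:3} and Lemma~\ref{lem:condition}) in order to \emph{verify} (C3); they are strictly stronger than (C3) and cannot be recovered from it. The same objection applies to your fallback of using inductive bounds on $\EE((X)_s)$, which are again averages.

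The paper's proof of the abstract lemma avoids any per-tuple uniformity by a truncation device absent from your argument. Set $N=\EE(Y)^{-1/(2r)}$ and split the non-disjoint contribution according to whether $X<N$ or $X\geq N$. On $\{X<N\}$, each tuple of $\Gamma_{\geq 1}$ is charged to one of its $\binom{r}{2}$ intersecting pairs while the other $r-2$ coordinates range over the at most $X<N$ elements of $\A$ inside $E$, giving the pathwise bound $\binom{r}{2}\,Y\,N^{r-2}$ with expectation $\binom{r}{2}\,\EE(Y)^{(r+2)/(2r)}\to 0$ by (C2). On $\{X\geq N\}$ the contribution is at most $X^{r}$, and Cauchy--Schwarz together with Markov's inequality and (C3) gives $\EE\big({\bf 1}_{(X\geq N)}X^{r}\big)\leq\EE(X^{2r})^{1/2}\,\big(\EE(X)/N\big)^{1/2}=O\big((\EE(X)\,\EE(Y)^{1/(2r)})^{1/2}\big)\to 0$. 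To salvage your component-wise factorisation you would need either to add the uniform conditional-sum bound as an explicit hypothesis (narrowing the lemma) or to replace that step with a truncation of this kind.
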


\begin{proof} For any integer $r\geq 2$, define  
\[
(\A)_{r}:=\{(T_{1},\ldots, T_{r})\in \A^{r} \, : \, T_1,\ldots, T_r \text{ are pairwise distinct} \}.
\]
Let
\[
\Gamma_{0}^{(r)}=\{(T_{1},\ldots, T_{r})\in (\A)_{r} \, : \, T_1,\ldots, T_r \text{ are pairwise disjoint} \},
\]
and write
$\Gamma_{\geq 1}^{(r)}=(\A)_{r}\backslash \Gamma_{0}^{(r)}$.  Observe that
\[
|\Gamma_{\geq 1}^{(r)}|\le {r \choose 2}\, |I_{\geq1}|\, |\A|^{r-2}.
\]
 Condition (C1) implies that $|I_{\geq1}|=o(|\A|^{2})$, and hence 
\[
|\Gamma_{\geq 1}^{(r)}|=o(|\A|^{r}).
\] 
It follows that
\begin{equation}\label{eq:Gammazero}
|\Gamma_{0}^{(r)}|\sim |(\A)_{r}|.
\end{equation}
Write the random variable $(X)_{r}$ as  
\[
(X)_{r} =\sum_{\Tsequence \in (\A)_{r}} {\bf 1}_{E} (\Tsequence)
= \sum_{\Tsequence \in \Gamma_{0}^{(r)} } {\bf 1}_{E} (\Tsequence) +\sum_{\Tsequence \in \Gamma_{\geq 1}^{(r)} } {\bf 1}_{E} (\Tsequence).
\]
Applying the estimate \eqref{eq:Gammazero} and the assumption that $|\A|\rightarrow \infty$,  we obtain 
\begin{equation}\label{eq:expectationGamma}
\EE\Big(\sum_{\Tsequence \in \Gamma_{0}^{(r)} } {\bf 1}_{E} (\Tsequence)\Big)
 =|\Gamma_{0}^{(r)}|\,\delta^{ar}\sim |(\A)_r|\,\delta^{ar} \sim  \lambda^{r}.
\end{equation}
It remains to prove that $\EE\Big(\sum_{\Tsequence \in \Gamma_{\geq 1}^{(r)} } {\bf 1}_{E} (\Tsequence)\Big) = o(1)$. 

Recall that $Y$, defined in (\ref{eq:y}), counts pairs of distinct elements of $\mathcal{A}$ with non-empty intersection which are contained in $E$.
If $Y=0$ with probability one then $\sum_{t\in \Gamma_{\geq 1}^{(r)} } {\bf 1}_{E} (\Tsequence)=0$ with probability one. Hence the result holds in this trivial case.  
For the remainder of the proof, suppose that $Y>0$ with positive probability. 
Then $\EE(Y)>0$, so we may define 
\[
M=M_{Y,r}:=\EE(Y)^{-\frac{1}{2r}}.
\] 
%This is well-defined for any particular values of $q, n$.
%Recalling the definition of the random variable $X=X_{\A}(E)$ at \eqref{eq:x}, clearly we have
Write
\begin{align}
\sum_{\Tsequence \in \Gamma_{\geq 1}^{(r)} } {\bf 1}_{E} (\Tsequence)
 &= {\bf 1}_{(X< M)}\sum_{\Tsequence \in \Gamma_{\geq 1}^{(r)} } {\bf 1}_{E} (\Tsequence)
 \,\, + \,\, {\bf 1}_{(X\geq M)}\sum_{\Tsequence \in \Gamma_{\geq 1}^{(r)} } {\bf 1}_{E} (\Tsequence). 
\label{eq:bad}
\end{align}
Note that for any 
$
t=(T_1, \ldots, T_r)\in \Gamma_{\geq 1}^{(r)} 
$
there exists $1\le i <  j\le r$ with $|T_i\cap T_j|\ge 1$. 
Such pairs which are contained in $E$ are counted by $Y$, and there
are $\binom{r}{2}$ choices for $(i,j)$. 
Furthermore, when $X < M$ then there are at most $M^{r-2}$ choices
for the other entries of $t$.
Therefore, using the union bound,
%It follows that if $X_{\A}(E)\le M$ then 
\[
{\bf 1}_{(X< M)}\sum_{\Tsequence \in \Gamma_{\geq 1}^{(r)} } {\bf 1}_{E} (\Tsequence)\leq \binom{r}{2}\, Y\, M^{r-2}.
\]
By definition of $M$ and by assumption (C2), we have $\EE(Y)=o(\EE(X)^{2})\rightarrow0$ and hence
\begin{equation}
\label{eq:term1}
\EE\bigg({\bf 1}_{(X<M)}\, \sum_{t\in\Gamma^{(r)}_{\geq 1}} {\bf 1}_E(t)\bigg) \leq
 \binom{r}{2}\, \EE(YM^{r-2})
  %= \binom{r}{2}\, \EE(Y)^{1 - \frac{r-2}{2r}} 
  = \binom{r}{2}\, \EE(Y)^{\frac{r+2}{2r}} \rightarrow 0.
%\EE(YN^{r-2})=O(N^{-r})\rightarrow 0.
\end{equation}
So the first summand in (\ref{eq:bad}) is vanishing.

For the second summand in (\ref{eq:bad}), observe that
$
\sum_{\Tsequence \in \Gamma_{\geq 1}^{(r)} } {\bf 1}_{E} (\Tsequence) \le X^{r}.
$
%Thus
%\begin{equation}
%\label{eq:term2}
%{\bf 1}_{(X\geq M)}\sum_{\Tsequence \in \Gamma_{\geq 1}^{(r)} } {\bf 1}_{E} (\Tsequence)\le {\bf 1}_{(X\geq M)} X^{r}.
%\end{equation}
Using this and the Cauchy--Schwarz inequality (in the form $|\EE(ZW)|^2\leq \EE(Z^2)\EE(W^2)$
for random variables $Z$, $W$), we obtain
\[
\EE\Big({\bf 1}_{(X\geq M)}\sum_{\Tsequence \in \Gamma_{\geq 1}^{(r)} } {\bf 1}_{E} (\Tsequence)\Big) \leq \EE\Big({\bf 1}_{(X\geq M)}\, X^r\Big)
 \leq \EE ({\bf 1}_{(X\geq M)})^{1/2}\,\, \EE(X^{2r})^{1/2}.
\]
Now assumption (C3) says that $\EE(X^{2r})= O_r(1)$, and 
\[
 \EE\big({\bf 1}_{(X\geq M)}\big) = \PP(X\geq M) \leq \frac{\EE(X)}{M} =
  \EE(X) \EE(Y)^{\frac{1}{2r}} \to 0
\]
using Markov's inequality, the definition of $M$ and the fact that $\EE(Y)\to 0$,
which follows from assumption (C2).  Hence the second term of (\ref{eq:bad}) satisfies
\begin{equation}
\label{eq:term2}
\EE\bigg({\bf 1}_{(X\geq M)}\, \sum_{t\in\Gamma^{(r)}_{\geq 1}} {\bf 1}_E(t)\bigg) 
= O_r(1)\, \Big(\EE(X)\EE(Y)^{\frac{1}{2r}}\Big)^{1/2} \to 0.
\end{equation}

Combining 
\eqref{eq:term1} and  \eqref{eq:term2} shows that \eqref{eq:bad} is vanishing, 
and together with \eqref{eq:expectationGamma}, this completes the proof.
%Then the method of moments implies that 
%$
%X \xrightarrow{\text{d}} \operatorname{Po}(\lambda),
%$
%which completes the proof.
\end{proof}

\section{Proof of the general theorem}\label{sec:general}

%We  patterns $\A$ defined in Definition~\ref{patterns} satisfy conditions (C1), (C2) from
%Lemma~\ref{lem:second} and conditions (C3) from Lemma~\ref{lem:high}. 
%Then Theorem~\ref{thm:main} and Theorem~\ref{thm:Poisson} follow immediately.

In this section we prove Theorem~\ref{thm:general}.  Recall that every element of
$\A$ contains $a$ vectors, and that $a$ is a fixed positive integer.

\begin{proof}[Proof of Theorem~\ref{thm:general}]\ Since $\A\subseteq \mathbb{F}_q^n$ it follows immediately that $b\leq a$.
Asymptotics are as $q+n\to\infty$ unless otherwise specified.

By definition of $t(n,q)$, if $\delta =o(t(n,q))$ then
\[ \EE(X) = |\A|\, \delta^a = |\A|\, o( q^{c-bn} ) = o(1).\]
This establishes the negative side of the threshold result.

Next we estimate the cardinality of the set $I_{k}$ defined in~\eqref{eq:ik}. 
For any $(T,T')\in I_{k}$ with $ k\in\{ 1, \ldots, b-1\}$, the sets $T$ and $T'$ intersect in 
$k$ points of $\F$. As there are $O(q^{kn})$ choices for this intersection, it follows that
\[
|I_{k}|=O(q^{kn})\,O(q^{2(b-k)n-2c})=|\A|^{2}\,O(q^{-kn}).
\]
If $k\in \{b,\ldots, a-1\}$ then $k$ points determine $O(1)$ elements of $\A$,
so $|I_k|\leq |\A|{a \choose k}\, O(1)^2$. Hence,
as $a$ is a fixed constant,
\begin{equation}\label{eq:3ik}
|I_{k}|=
  \begin{cases}
   |\A|^{2}\,O(q^{-kn}) & \text{if $1\leq k\leq b-1$}, \\
   |\A|^{2}\,O(q^{-bn+c}) &\text{otherwise}.
  \end{cases}
\end{equation}
It follows that 
\[ |I_{0}|=|A|^{2} - |A| - \sum_{k=1}^{a-1} |I_k| = |A|^2\left(1 - \,O(q^{-n+c})\right)\sim |A|^{2},\]
and condition (C1) holds.

Recall the random variables $X$ and $Y$ defined in \eqref{eq:x} and~\eqref{eq:y}, respectively.
%we intend to show that $\EE(Y)=o(\EE(X)^{2})$ when $\EE(X)=|\A|\,\delta^a\rightarrow \infty$,
%or $\EE(X)=|\A|\,\delta^a\rightarrow \lambda$ for some fixed $\lambda >0$  and
%either $b < a$ or $q\to\infty$.
Observe that
\begin{align*}
\EE(Y)&=\sum_{k=1}^{a-1}|I_k|\,\delta^{2a-k}\\
&=|A|^{2}\delta^{2a}\,\left(\sum_{k=1}^{b-1 }q^{-nk}\delta^{-k}+\sum_{k=b}^{a-1}q^{-bn+c}\delta^{-k}\right).
\end{align*}
Note that if $a=b$ then the summation from $b$ to $a-1$ vanishes, leaving only
the first summation.
Next, $\EE(X)=\Theta(q^{bn-c})\delta^{a}$, and hence
\begin{equation}
\label{expression}
q^{n}\, \delta=\Theta(q^{n(1-\frac{b}{a})+\frac{c}{a}}\, \EE(X)^{\frac{1}{a}}).
\end{equation}
The expression (\ref{expression}) tends to infinity if $\EE(X)\to\infty$ (whether $q$ is fixed or
tends to infinity).  If $\EE(X)\to\lambda$ for some constant $\lambda$, then the
expression in (\ref{expression}) tends to infinity whenever $b < a$, or when
$b=a, c>0$ and $q\to\infty$. 
Furthermore, for $k=b,\ldots, a$, we claim that
\[
q^{bn-c}\, \delta^{k}=\Theta(\EE(X))\, \delta^{k-a} \rightarrow \infty
\]
when $\EE(X)\rightarrow \infty$ or when $\EE(X)\rightarrow \lambda$ for some fixed positive 
$\lambda$. This is clear when $\EE(X)\to\infty$, as $k < a$;
on the other hand, if $\EE(X)\to\lambda$ for some constant $\lambda$
then we must have $\delta=o(1)$, so $\EE(X) \delta^{k-a} = \Theta(\delta^{k-a})\to\infty$.

Using the assumptions of Theorem~\ref{thm:general}, we conclude that
$\EE(Y)=o(\EE(X)^{2})$ if one of the following conditions holds:
\begin{itemize}
\item $\EE(X)\to\infty$, 
\item $b< a$ and $\EE(X)\to\lambda$ for some constant $\lambda$, and $q+n\to\infty$,
\item $b=a$, $c>0$ and $\EE(X)\to\lambda$ for some constant $\lambda$, and $q\to\infty$.
\end{itemize}
So (C2) holds when one of the above conditions holds, under the assumptions of Theorem~\ref{thm:general}. 

 %\csg{[Is this clear??]} 
 %
 %\cc{CC: [Thanks for noticing this!! Reword here, what do you think?]}
\bigskip

If $\delta = \omega(t(n, q))$ then $\EE(X)\rightarrow \infty$, and the above arguments 
show that (C1) and (C2) hold as $q+n\to\infty$.
By Lemma~\ref{lem:second}, we conclude that a.a.s.\ $E$ contains some element of $\A$.
This completes the proof that $t(n,q)$ is a coarse threshold function for $\A$.

\bigskip

For the remainder of the proof we assume that $\EE(X)\rightarrow \lambda$ for some fixed 
positive $\lambda$. 
If $b=a$ then we additionally assume that $q\to\infty$.
We will establish (C3) by proving that $\EE(X^{r})=O_{r}(1)$, by induction on $r$. The case $r=1$ holds  by assumption. Now we suppose that $\EE(X^{r})\leq C_{r}$ holds for some $r\geq 1$. Observe that  
\begin{equation} \label{eq:Xr}
\begin{aligned}
\EE(X^r)&=\EE\left(\sum_{(T_{i_{1}}, \ldots, T_{i_{r}} )\in \A^{r}} {\bf 1}_{E}(T_{i_1})\ldots {\bf 1}_{E}(T_{i_{r}}) \right)\\
&= \sum_{(T_{i_{1}}, \ldots, T_{i_{r}} )\in \A^{r}} \PP\left(\cup_{\ell=1}^{r}T_{i_{\ell}}  \subset E\right).
\end{aligned}
\end{equation}
Moreover, by the law of total probability,
\begin{align*}
\EE(X^{r+1})
 &=\sum_{(T_{i_{1}}, \ldots, T_{i_{r+1}} )\in \A^{r+1}} \PP\left(\cup_{\ell=1}^{r+1}T_{i_{\ell}}  \subset E \right)\\
%&=\sum_{(T_{i_{1}}, \ldots, T_{i_{r+1}} )\in \A^{r+1}} \PP\left(T_{i_{r+1}}\subset E\big | \cup_{\ell=1}^{r}T_{i_{\ell}}  \subset E \right)\PP\left(\cup_{\ell=1}^{r}T_{i_{\ell}}  \subset E\right)\\
&=\sum_{(T_{i_{1}}, \ldots, T_{i_{r}} )\in \A^{r}} \,\,\, \sum_{T_{i_{r+1}}\in \A} \PP\left(T_{i_{r+1}}\subset E \mid \cup_{\ell=1}^{r}T_{i_{\ell}}  \subset E \right) \, \PP\left(\cup_{\ell=1}^{r}T_{i_{\ell}}  \subset E\right).
\end{align*}
Together with  \eqref{eq:Xr}, we conclude that it is sufficient to prove that there exists positive constant $\widehat{C}_r$ such that for any 
$T_{1}, \ldots, T_{r} \in \A$,
\begin{equation} \label{eq:induction}
\sum_{T\in \A}\PP(T\subset E \mid \cup_{\ell=1}^{r}T_{\ell}\subset E)\le \widehat{C}_r.
\end{equation}
Indeed suppose that \eqref{eq:induction} is true, then  the above argument implies  
$$
\EE(X^{r+1})\le \widehat{C}_r\, C_r =O_r(1),
$$
and we may take $C_{r+1} = \widehat{C}_r \, C_r$.

We now turn to the proof of \eqref{eq:induction}. 
Let $B=\bigcup_{i=1}^{r}T_{i}$. For $0\leq k\leq a $, define 
\[
J_{k}=\{T \in \A \, : \, |T \cap B|=k\}.
\]
It follows from \eqref{eq:3ik} that
\begin{equation*}
|J_{k}|=
  \begin{cases}
   O_{r}(q^{(b-k)n-c}) & \text{if $0\leq k\leq b-1$}, \\
  O_{r}(1) &\text{otherwise}.
  \end{cases}
\end{equation*}
We know that $q^n\delta\to \infty$, as proved below (\ref{expression}),
requiring the additional assumption that $q\to\infty$ when $b=a$.
%Since $\EE(X)=\Theta(q^{bn-c}\delta^{a})\rightarrow \lambda$, we obtain $q^{n}\delta\rightarrow \infty$. 
Therefore
\begin{align*}
\sum_{T\in \A}\PP(T \subset E \mid \cup_{\ell=1}^{r}T_{\ell}\subset E)&=\sum^{a}_{k=0}|J_{k}|\,\delta^{a-k}\\
&=\sum^{b-1}_{k=0}\lambda\, O(q^{-kn}\,\delta^{-k})+\sum^{a}_{k=b}O(1)\,\delta^{a-k}\\
&=O_{\lambda,r}(1).
\end{align*}
This shows that (C3) holds, and hence by Lemma~\ref{lem:high} we conclude that
 $X \xrightarrow{\text{d}} \operatorname{Po}(\lambda)$, completing the proof.
\end{proof}

What can we say about the distribution of $X$ in the case that $\EE(X)\to\lambda$
for some positive constant $\lambda$, but $b=a$ and $q$ is fixed?
From (\ref{expression}) we see that 
\[ q^n\delta = \Theta(q^{c/a}\, \EE(X)^{1/a}) = \Theta(1),\]
which implies that $\EE(Y) = \Theta( \EE(X)^2)$.  Thus condition (C2) fails,
and the argument of Lemma~\ref{lem:high} breaks down, as it is no longer true
that most pairs of patterns in $E$ are disjoint. It is possible that in this
case $X$ has a normal distribution.  We leave this as an open question.
%analogously to Ruci{\' n}ski's result~\cite{R} on small subgraphs in
%the random graph $G(n,p)$: 

\section{Applications}\label{s:applications}

The desired results for almost all of the patterns described in Definition~\ref{def:def}
follow immediately from Theorem~\ref{thm:general}.
The argument for $m$-dimensional subspaces is slightly more delicate and is given 
in Section~\ref{sec:planes} below.

\begin{corollary}\label{cor:applications}
Theorem \ref{thm:main} holds for $k$-APs, sums, parallelograms and right triangles.
%Theorem \ref{thm:main} and Theorem \ref{thm:Poisson} hold for $k$-APs, parallelograms, and right triangles.
\end{corollary}

\begin{proof}
It suffices to prove that the conditions of Theorem~\ref{thm:general} hold with the parameters 
given in Table~\ref{t:params}, 
noting that the asymptotic assumptions of Theorem~\ref{thm:general} match those specified in 
Definition~\ref{def:def}. 
\begin{table}[ht!]
\begin{center}
\renewcommand{\arraystretch}{1.2}
 \begin{tabular}{|c |c |c| c|} 
\hline
$\A_{a}$ & $a$ & $b$ & $c$ \\  
\hline
 $k$-APs & $k$ & 2 &  0\\ 
\hline
sums & 3 & 2 &  0\\ 
\hline
parallelograms & 4 & 3  & 0 \\
 \hline
right triangles & 3 &  3 & 1  \\
\hline
\end{tabular}
\caption{Parameter values for $k$-APs, sums, parallelograms and right triangles}
\label{t:params}
\end{center}
\end{table}
%Applying Theorem~\ref{thm:general} shows that Theorem~\ref{thm:main} and Theorem~\ref{thm:Poisson} hold,  
%by Lemmas \ref{lem:second} and  \ref{lem:high}. 
%Hence it remains only to prove that the conditions of Theorem~\ref{thm:general} hold with the parameter values
%given in Table~\ref{t:params}.

We give the proof for right triangles only: the proof for the other patterns follow similarly. 
Let $\A$  be the set of all right triangles in $\FF_{q}^n$.
Clearly $a=3$ as each right triangle contains $3$ points. Each right triangle can be chosen as follows: 
we first choose two distinct vectors $x, y\in \FF_{q}^{n}$, in $\Theta(q^{2n})$ different ways, then we choose a 
vector $z\in \FF_{q}^{n}$ such that 
\begin{equation}\label{eq:orthogonal}
(z-x)\cdot(y-x)=0 \quad \text{ or } \quad (z-y) \cdot (x-y)=0.
\end{equation}
Since $\{\xi\in \FF_{q}^{n}: \xi \cdot (x-y)=0\}$ is a $(n-1)$-dimensional subspace, there are $\Theta(q^{n-1})$ choices of $z$ such that  \eqref{eq:orthogonal} holds. 
Thus $|\A|=O(q^{3n-1})$. For a lower bound, observe that each right triangle in $\FF_{q}^{n}$ was chosen at most $O(1)$ times  using this process. Furthermore, the above process may also produce triples $(x, y, z)$ with $z=x$ or $z=y$, and 
these are not right triangles. Therefore
\[
\Theta(q^{3n-1})\leq O(1)\, |\A|\, +O(q^{2n}).
\]
Since $n\geq 2$ it follows that $|\A| = \Theta (q^{3n-1})$, 
so we take $b=3$ and $c=1$ as in Table~\ref{t:params}. 
Furthermore the above arguments also implies that the condition \eqref{eq:10} holds for right triangles.
\end{proof}

\subsection{Affine planes}\label{sec:planes}
Let $G(n,m)$ be the collection of all $m$-dimensional linear subspaces of $\F$, and  $\A=A(n,m)$ be the family of all $m$-dimensional planes. It is not hard to obtain (see \cite[Theorem 6.3]{Cameron} or the proof of Lemma~\ref{lem:intersection}(i), below),
\begin{equation*}
|G(n,m)|=\frac{(q^{n}-1)(q^{n}-q)\ldots (q^{n}-q^{m-1})}{(q^{m}-1)(q^{m}-q)\ldots (q^{m}-q^{m-1})},
\end{equation*}  
and hence
\begin{equation}\label{eq:G}
|G(n,m)|=\Theta_{q, m}(q^{mn}).
\end{equation}
Since every element of $A(n,m)$ is obtained by translating some $m$-dimensional plane, and  $q^{n-m}$ elements of $A(n, m)$ are obtained by translating a given $m$-dimensional plane, we obtain
\begin{equation}\label{eq:A(n,m)}
|A(n,m)|=|G(n,m)|\, q^{n-m}=\Theta_{q,m}(q^{(m+1)n}).
\end{equation}
By definition of  $t(n, q)=q^{-\frac{(m+1)n}{q^{m}}}$, if $\delta=o(t(n, q))$ then 
\[
\EE(X)=|A(n,m)|\, \delta^{q^{m}}=o(1).
\]
This establishes the negative side of the threshold result. 

Note that for any $T, T' \in A(n,m)$, the intersection  $T\cap T'$ is ether empty or a  $k$-dimensional plane for some $k=0, \ldots, m$. Recall the definition of $I_{q^{k}}$ from \eqref{eq:ik}.

\begin{lemma}\label{lem:intersection} Using above notations, we obtain the following estimates.
\begin{itemize}
\item[\emph{(i)}]  Let $V\in A(n,k)$. Then for $m\geq k,$
\[
|\{W\in A(n,m): V\subset W\}|=|G(n-k,m-k)|=\Theta (q^{(m-k)n}).
\]
\item[\emph{(ii)}]  For $k=0, \ldots, m$ we have
\[
|I_{q^{k}}|\leq |A(n,k)|| G(n-k,m-k)|^{2}=| A(n,m)|^{2} \, O(q^{-(k+1)n}).
\]
It follows that 
\[
| I_{\geq 1}|=| A(n,m)|^{2}\, O(q^{-n}),
\]
and hence 
\[
|I_{0}|= |A(n,m)|^{2}\, (1+O(q^{-n})).
\]
\item[\emph{(iii)}] Recall that $\EE(X)=| A(n,m)|\delta^{q^{m}}=:\lambda_{\A}$. If $\lambda_{\A}\rightarrow \infty$ or $\lambda_{\A}\rightarrow \lambda$ for some positive $\lambda$, then for any $ k=0, \ldots, m-1$, 
\begin{equation}
q^{-n(k+1)}\delta^{-q^{k}}=o(1).
\end{equation}
\end{itemize}
\vspace*{-\baselineskip}
\end{lemma}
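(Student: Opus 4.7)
The plan for part (i) is to translate $V$ so that it contains the origin; this turns $V$ into a $k$-dimensional linear subspace, and then any affine subspace $W$ containing $V$ also contains $0$ and is therefore itself a linear subspace. Counting $m$-dimensional linear subspaces of $\F$ that contain $V$ reduces, via the correspondence $W \mapsto W/V$, to counting $(m-k)$-dimensional linear subspaces of the quotient $\F/V \cong \mathbb{F}_q^{n-k}$, which gives exactly $|G(n-k, m-k)|$. The asymptotic $\Theta(q^{(m-k)n})$ then follows from \eqref{eq:G} applied with parameters $(n-k, m-k)$, with the factor $q^{-k(m-k)}$ absorbed into the implicit constant since $q, m, k$ are fixed.

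For part (ii), each pair $(T, T') \in I_{q^k}$ has $T \cap T'$ equal to a $k$-dimensional affine subspace. I would bound $|I_{q^k}|$ from above by first choosing this intersection in $|A(n,k)|$ ways and then choosing each of $T, T'$ as an $m$-plane containing it in $|G(n-k, m-k)|$ ways by part (i); this overcounts because the chosen subspace may be strictly contained in the actual intersection, but the overcount is harmless for an upper bound. Plugging in \eqref{eq:G} and \eqref{eq:A(n,m)} together with the estimate from (i) reduces the ratio to $|A(n,k)|\,|G(n-k,m-k)|^2 / |A(n,m)|^2 = O(q^{-(k+1)n})$. Summing over $k = 0, \ldots, m$, the term $k=0$ dominates and yields $|I_{\geq 1}| = |A(n,m)|^2\, O(q^{-n})$; the final claim then follows from $|I_0| = |A(n,m)|^2 - |I_{\geq 1}|$.

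For part (iii), I would rewrite the desired bound as $q^{(k+1)n}\,\delta^{q^k} \to \infty$. Since $\lambda_\A = \Theta(q^{(m+1)n})\,\delta^{q^m}$, the hypothesis gives $q^{(m+1)n}\,\delta^{q^m} = \Omega(1)$, with $\omega(1)$ in the $\lambda_\A \to \infty$ branch. Raising this relation to the power $q^k/q^m$ yields $q^{(m+1)n/q^{m-k}}\,\delta^{q^k} = \Omega(1)$, leaving a residual factor $q^{n[(k+1) - (m+1)/q^{m-k}]}$ to compare with the target. The main obstacle will then be verifying the arithmetic inequality $(k+1)q^{m-k} > m+1$ for all $k = 0, \ldots, m-1$: I would observe that $f(k) := (k+1)q^{m-k}$ is strictly decreasing on this range, since $f(k+1)/f(k) = (k+2)/((k+1)q) < 1$ for $q \geq 2$, so its minimum is $f(m-1) = mq$, which exceeds $m+1$ whenever $m(q-1) > 1$. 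In the single corner case $m=1, q=2$ the residual exponent is exactly zero, and the $\omega(1)$ slack from the $\lambda_\A \to \infty$ branch is needed to absorb it.
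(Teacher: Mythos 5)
Your argument is correct and, for parts (ii) and (iii), follows essentially the same route as the paper; the differences are worth noting. In (i) you invoke the correspondence theorem ($W\mapsto W/V$ after translating $V$ to the origin), whereas the paper counts directly by extending a basis of $V$ to a basis of $W$ and dividing by the number of generating sets, arriving at the same quotient formula $|G(n-k,m-k)|$; both are valid, and yours is the more structural derivation. In (ii) your counting (choose the intersection as a $k$-plane, then each of $T,T'$ containing it) is exactly the paper's, and your remark that this only gives an upper bound is the right justification. In (iii) the paper makes the identical reduction to the inequality $k+1>(m+1)q^{k-m}$, but verifies it by observing that $(x+1)/q^{x}$ is strictly decreasing \emph{for $q\geq 3$}, so the paper's proof silently excludes $q=2$. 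Your analysis via $f(k)=(k+1)q^{m-k}$ is sharper: you correctly isolate $m=1$, $q=2$ as the unique case where the exponent gap $\alpha$ vanishes, and correctly observe that there the conclusion survives only in the $\lambda_{\A}\to\infty$ branch (indeed, for $q=2$, $m=1$, $\lambda_{\A}\to\lambda$ forces $q^{n}\delta=\Theta(1)$, so the stated $o(1)$ bound genuinely fails). Two minor points: your claim that $f(k+1)/f(k)<1$ for all $q\geq 2$ fails at $k=0$, $q=2$ (the ratio equals $1$), but $f$ is still non-increasing so your identification of the minimum $f(m-1)=mq$ stands; and it would be worth stating explicitly that your fix for the corner case means the Poisson statement (which needs the $\lambda_{\A}\to\lambda$ branch) requires $q\geq 3$ or $m\geq 2$, which is consistent with the restriction $q\geq 3$ the paper imposes elsewhere for planes.
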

\begin{proof}

For (i), we  assume that $m>k$ and $V\in G(n,k)$. Observe that to obtain a $m$-dimensional subspace which contains $V$, it is sufficient to choose a set $S=\{u_{1}, \ldots, u_{m-k}\}$ of $\F$ such that $S\cup V$ spans an $m$-dimensional subspace. There are $q^{n}-q^{k}$  choices for $u_{1}$ (all except the  vectors from the subspace $V$).  To choose $u_{2}$, we have $q^{n}-q^{k+1}$  choices to ensure that $u_{2}$ is independent of $\{u_{1}\}\cup V$, and so on. In the end there are 
\[
(q^{n}-q^{k})(q^{n}-q^{k+1})\ldots (q^{n}-q^{m-1})
\]  ways to choose $S$. Note that for any $m$-dimensional subspace which contains $V$, there are  
\[
(q^{m}-q^{k})(q^{m}-q^{k+1})\ldots (q^{m}-q^{m-1})
\] choices of $S$ which generate (or span) the same subspace. It follows that the  number of $m$-dimensional subspaces which contain $V$ is 
\[
\frac{(q^{n}-q^{k})(q^{n}-q^{k+1})\ldots (q^{n}-q^{m-1})}{(q^{m}-q^{k})(q^{m}-q^{k+1})\ldots (q^{m}-q^{m-1})}=|G(n-k,m-k)|,
\] 
and  applying \eqref{eq:G} completing the proof of (i).

To prove  (ii), note that $T$ intersects $T'$ at some $k$-dimensional plane for any $(T, T')\in I_{q^{k}}$. For every $k$-plane there are  $|G(n-k,m-k)|$  $m$-planes which contain it, and hence by (i) above we have
\[
|I_{q^{k}}|\, \leq |A(n,k)|| G(n-k,m-k)|^{2} 
= | A(n,m)|^{2}\, O(q^{-n(k+1)}).
\]
To establish (iii), since $\lambda_{\A}=|A(n,m)|\delta^{q^{m}}$, by \eqref{eq:A(n,m)} we have
\[
\delta^{q^{m}}=\lambda_{\A}\, q^{-n(m+1)}\, \Theta(1).
\]
Then
\begin{align*}
q^{n(k+1)}\, \delta^{q^{k}}&=q^{n(k+1)}\,\lambda_{\A}^{q^{k-m}}\, q^{-n(m+1)\,q^{k-m}}\, \Theta(1)\\
&=\lambda_{\A}^{q^{k-m}}q^{n(k+1-(m+1)q^{k-m})}\,\Theta(1).
\end{align*}
Observe that for any $x\geq 0$ and $q\geq 3$, 
\[
\frac{x+1}{q^{x}}>\frac{x+2}{q^{x+1}}.
\]
It follows that $k+1>(m+1)q^{k-m} $ for any $k=0, 1, \ldots, m-1$. Define  
\[
\alpha=\alpha(q,m)=\min\{k+1-(m+1)q^{k-m}\,: k\in \NN, \, k=0, \ldots, m-1\}.
\]
Thus $\alpha >0$. It follows that for any $ k=0, \ldots, m-1$,
\[
q^{n(k+1)}\,\delta^{q^{k}}=\lambda_{\A}^{q^{k-m}}\,q^{n\alpha}\,\Omega_{q,m}(1),
\]
completing the proof. 
\end{proof}

We immediately have the following consequence. 

\begin{lemma}\label{lem:yy}
Suppose that $\EE(X)\rightarrow \infty$ or $\EE(X)\rightarrow \lambda$  for some fixed $\lambda >0$. Then $\EE(Y)=o(\EE(X)^{2})$, so \emph{(C2)} holds.
\end{lemma}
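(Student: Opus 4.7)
The plan is to expand $\EE(Y)$ directly from its definition in~(\ref{eq:y}) by stratifying the sum over pairs $(T, T') \in A(n,m)^2$ according to $|T \cap T'|$, and then reduce to the two estimates already proved in Lemma~\ref{lem:intersection}. The relevant structural observation, noted in the paragraph before Lemma~\ref{lem:intersection}, is that for distinct $T, T' \in A(n,m)$ the intersection $T \cap T'$ is either empty or a $k$-dimensional plane of size $q^{k}$ with $k \in \{0, 1, \ldots, m-1\}$; the case $k = m$ is excluded because it forces $T = T'$. So the only intersection cardinalities that contribute to $Y$ are $1, q, q^{2}, \ldots, q^{m-1}$.

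Once pairs are collected by the dimension $k$ of their intersection, and noting that $|T \cup T'| = 2q^{m} - q^{k}$, $\EE(Y)$ becomes a sum of $m$ terms of the form $|I_{q^{k}}|\, \delta^{2q^{m} - q^{k}}$. Dividing through by $\EE(X)^{2} = |A(n,m)|^{2}\, \delta^{2q^{m}}$ and applying the bound on $|I_{q^{k}}|$ from Lemma~\ref{lem:intersection}(ii), each term of $\EE(Y)/\EE(X)^{2}$ collapses to $O\bigl(q^{-(k+1)n}\, \delta^{-q^{k}}\bigr)$. By Lemma~\ref{lem:intersection}(iii), each such term is $o(1)$ under either of the assumed asymptotics on $\lambda_{\A}$; since $m$ is fixed, the sum consists of a bounded number of $o(1)$ terms and is therefore itself $o(1)$, which is exactly (C2).

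I do not anticipate any real obstacle: the substantive work has been dispatched inside Lemma~\ref{lem:intersection}, where the inequality $k + 1 > (m+1) q^{k-m}$ for $k \leq m-1$ was used to extract a positive exponent $\alpha(q, m)$. The only care needed here is bookkeeping, namely keeping straight the distinction between the \emph{dimension} $k$ of the intersection and its \emph{cardinality} $q^{k}$, ensuring the exponent on $\delta$ is $2q^{m} - q^{k}$, and remembering that $\EE(X)$ and $\lambda_{\A}$ refer to the same quantity.
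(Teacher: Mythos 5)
Your proposal is correct and follows essentially the same route as the paper: stratify $\EE(Y)$ by the dimension $k$ of the intersection, bound each term via Lemma~\ref{lem:intersection}(ii), and kill each of the boundedly many terms with Lemma~\ref{lem:intersection}(iii). If anything you are slightly more careful than the paper's displayed sum, which starts at $k=1$ and thereby appears to omit the $k=0$ (single-point intersection) term that your version correctly includes and that Lemma~\ref{lem:intersection}(iii) does cover.
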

\begin{proof}
Observe that, using Lemma \ref{lem:intersection}(ii),  
\begin{equation*}
\EE(Y)=\sum_{k=1}^{m-1}|\,I_{q^{k}}|\,\delta^{2q^{m}-q^{k}}=\EE(X)^2 \sum_{k=1}^{m-1}O(q^{-n(k+1)}\,\delta^{-q^{k}}).
\end{equation*}
Lemma~\ref{lem:intersection}(iii) implies that 
\[
\sum_{k=1}^{m-1}O(q^{-n(k+1)}\,\delta^{-q^{k}})=o(1),
\] 
as required.
\end{proof}

Recall that $t(n, q)=q^{-\frac{(m+1)n}{q^{m}}}$. If $\delta=\omega(t(n,q))$ then $\EE(X)=|A(n, m)| \delta^{q^{m}}\rightarrow \infty$. Lemma \ref{lem:intersection}(ii) and Lemma \ref{lem:yy} implies that Lemma \ref{lem:second} holds for $m$-dimensional planes. 
This establishes the coarse threshold result of Theorem~\ref{thm:main} 
for $m$-dimensional planes.

For later use, we state the following easy fact as a lemma.  It follows directly from Lemma \ref{lem:intersection}(i).

\begin{lemma}\label{lem:cover}
Given $F=\{x_{1}, \ldots, x_{t}\}\subseteq \F$, let $d(F)$ be the smallest integer $k$ such that $F\subseteq V$ for some $V\in A(n, k)$. Then for $m\geq d(F)$,
\[
|\{T\in A(n,m): F\subseteq T\}|=|G(n-d(F),m-d(F))|=\Theta_{q, m}(q^{n(m-d(F))}).
\]
\end{lemma}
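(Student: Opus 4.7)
The plan is to reduce the count to Lemma~\ref{lem:intersection}(i) by identifying a canonical minimal affine plane $V_F$ containing $F$, and then showing that the $m$-planes containing $F$ coincide with those containing $V_F$. For the first step I would observe that there exists a unique $V_F \in A(n, d(F))$ with $F \subseteq V_F$. Existence is built into the definition of $d(F)$. For uniqueness, if $V_1, V_2 \in A(n, d(F))$ both contain $F$, then the intersection $V_1 \cap V_2$ is a non-empty affine plane (intersections of affine planes, when non-empty, are affine planes) containing $F$ of dimension at most $d(F)$; minimality forces the dimension to equal exactly $d(F)$, and since $V_1 \cap V_2 \subseteq V_1$ with equal dimension, $V_1 = V_1 \cap V_2 = V_2$.

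Next I would show that every $T \in A(n, m)$ with $F \subseteq T$ must contain $V_F$. The intersection $V_F \cap T$ is a non-empty affine plane containing $F$ of dimension at most $d(F)$; if its dimension were strictly less, this would contradict the definition of $d(F)$. Therefore $\dim(V_F \cap T) = \dim V_F$, forcing $V_F \cap T = V_F$, i.e.\ $V_F \subseteq T$. Combining these observations,
\[
\{T \in A(n, m) : F \subseteq T\} = \{T \in A(n, m) : V_F \subseteq T\},
\]
so Lemma~\ref{lem:intersection}(i) applied with $V = V_F$ and $k = d(F)$ yields the exact count $|G(n - d(F), m - d(F))|$.

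The asymptotic $\Theta_{q, m}(q^{n(m - d(F))})$ then follows from \eqref{eq:G} applied with $n$ replaced by $n - d(F)$ and $m$ by $m - d(F)$: the resulting exponent $(m - d(F))(n - d(F))$ differs from $n(m - d(F))$ only by $(m - d(F))\, d(F)$, which is bounded in terms of $q$ and $m$ (since $d(F) \leq m$) and hence absorbed into the $\Theta_{q, m}$ constant. I do not anticipate any substantive obstacle — once the canonical plane $V_F$ is identified, the statement is a direct repackaging of Lemma~\ref{lem:intersection}(i).
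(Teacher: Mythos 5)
Your proof is correct and follows essentially the same route as the paper, which simply asserts that the lemma ``follows directly from Lemma~\ref{lem:intersection}(i)'' without further detail. Your argument --- identifying the unique minimal plane $V_F$ and showing that an $m$-plane contains $F$ if and only if it contains $V_F$ --- is exactly the justification the paper leaves implicit.
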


Now we show that if $\EE(X)\rightarrow \lambda$ for some fixed positive $\lambda$ then for  any $r\in \NN$, $\EE(X^{r})=O_{r}(1)$. Applying the same argument as in the proof of Theorem~\ref{thm:general}, it is sufficient to prove the following result.

\begin{lemma}\label{lem:condition}
Assume that $\EE(X)\rightarrow \lambda$ for some fixed positive $\lambda$. Let $T_{1}, \ldots, T_{r} \in A(n,m)$. Then
\begin{equation*}
\sum_{T\in A(n,m)}\PP(T\subseteq E \mid \cup_{i=1}^{r}T_{i}\subseteq E)=O_{q,m,\lambda,r}(1).
\end{equation*}
\end{lemma}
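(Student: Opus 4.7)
The plan is to use the conditional independence of coordinates in the Bernoulli model to reduce the left-hand side to a sum over intersection patterns. First I would set $B = \bigcup_{i=1}^r T_i$, noting $|B|\leq rq^m=O_{q,m,r}(1)$; since the indicators $\mathbf{1}_E(x)$ are independent across $x\in\F$,
\[
\PP(T\subseteq E \mid B\subseteq E) \;=\; \delta^{|T\setminus B|} \;=\; \delta^{q^m - |T\cap B|},
\]
so it will suffice to bound $\sum_{T\in A(n,m)} \delta^{q^m-|T\cap B|}$ by a quantity depending only on $q,m,\lambda,r$.

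Next I would partition the sum according to the intersection pattern $F = T\cap B$, which ranges over subsets of the fixed set $B$. Using $\{T: T\cap B = F\} \subseteq \{T: F\subseteq T\}$ gives
\[
\sum_{T\in A(n,m)}\delta^{q^m-|T\cap B|} \;\leq\; \sum_{F\subseteq B} \bigl|\{T\in A(n,m) : F\subseteq T\}\bigr|\,\delta^{q^m-|F|}.
\]
Since $|B|=O_{q,m,r}(1)$, the outer sum has $O_{q,m,r}(1)$ terms, so I would only need to bound each summand by $O_{q,m,\lambda,r}(1)$.

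For $F=\emptyset$ the inner count is trivially $|A(n,m)|$, contributing $|A(n,m)|\,\delta^{q^m}=\EE(X)=\Theta_{q,m,\lambda}(1)$. For $F\neq\emptyset$ I would set $k=d(F)\in\{0,\ldots,m\}$ and invoke Lemma~\ref{lem:cover} to count $|\{T:F\subseteq T\}|=\Theta_{q,m}(q^{n(m-k)})$; the bound $|F|\leq q^k$ comes from $F$ lying in a $k$-plane, and since $\EE(X)\to\lambda$ the identity $\delta^{q^m}=\Theta_{q,m,\lambda}(q^{-n(m+1)})$ follows from \eqref{eq:A(n,m)}. Putting these together gives
\[
\bigl|\{T:F\subseteq T\}\bigr|\,\delta^{q^m-|F|} \;=\; \Theta_{q,m,\lambda}\!\bigl(q^{n(m-k)}\delta^{q^m-q^k}\bigr) \;=\; \Theta_{q,m,\lambda}\!\bigl(q^{-n(k+1)}\delta^{-q^k}\bigr),
\]
which Lemma~\ref{lem:intersection}(iii) makes $o(1)$ for $k\in\{0,\ldots,m-1\}$ and which simplifies to $\Theta_{q,m,\lambda}(1)$ when $k=m$.

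The main obstacle will be choosing the right invariant to parametrise the intersection patterns. The quantity $d(F)$ from Lemma~\ref{lem:cover} is the natural choice: it simultaneously controls the count of $m$-planes through $F$ and, via $|F|\leq q^{d(F)}$, the worst-case exponent of $\delta$. Once this is in place the rest is a routine application of Lemmas~\ref{lem:cover} and~\ref{lem:intersection}(iii); the boundary case $k=m$ must be handled by hand because Lemma~\ref{lem:intersection}(iii) is stated only for $k\leq m-1$, but there the $m$-plane through $F$ is unique and the contribution is plainly $O(1)$.
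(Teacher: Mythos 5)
Your argument is correct and is essentially the paper's own proof: both decompose the sum according to the intersection with $B$, bound the number of $m$-planes through a given intersection via Lemma~\ref{lem:cover} in terms of the dimension of the smallest plane containing it, and finish with Lemma~\ref{lem:intersection}(iii), the only difference being that you sum over subsets $F\subseteq B$ while the paper groups the terms by $|T\cap B|=k$ in the ranges $q^{j-1}<k\leq q^{j}$. One cosmetic point: since $|F|$ may be strictly smaller than $q^{d(F)}$, the displayed $\Theta$ in your final estimate should be an $O$ (it is only the worst-case upper bound you intended), but the conclusion stands.
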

\begin{proof}
Let $B=\cup_{i=1}^{r}T_{i}$. For $k=0, 1, \ldots, q^{m}$, define 
\[
J_{k}=\{T \in A(n,m): |T \cap B|=k\}.
\]
We will split  the sum up as follows:
\begin{align}
& \sum_{T\in A(n,m)}\PP(T\subseteq E  \mid \cup_{i=1}^{r}T_{i}\subseteq E) \nonumber \\
&\qquad =\sum_{k=0}^{q^{m}}|J_{k}|\, \delta^{a-k} \nonumber \\
&\qquad =|J_{0}|\, \delta^{a}  +|J_{1}|\, \delta^{a-1}+ \sum_{j=1}^{m-1} \sum_{q^{j-1}<
 k\leq q^{j}} |J_{k}|\, \delta^{a-q^{j}}+\sum_{q^{m-1}<k\leq q^{m}}|J_{k}| \, \delta^{a-k}.
\label{eq:split}
\end{align}
Let $a:=q^{m}$ and $\lambda=\EE(X)$.
First, $|J_{0}|\leq |A(n,m)|$ so 
\begin{equation}
\label{eq:0}
|J_{0}|\delta^{a}\leq |A(n, m)|\delta^{a}=\lambda=O(1).
\end{equation}
Next, by (\ref{eq:A(n,m)}),
\[
 |J_{1}|\leq |B| \, |G(n, m)|=O(q^{-n})\, |A(m, n)|.
\]
Applying Lemma \ref{lem:intersection}(iii) we have 
\begin{equation}
|J_{1}|\delta^{a-1}=O( q^{-n} \delta^{-1} \lambda ) =o(1).
\label{eq:1}
\end{equation}
Now let  $k\in \{2, 3, \ldots, q^{m}\}$. Then there exists $j\in \{ 1, \ldots, m\}$ such that   $q^{j-1}<k\leq q^{j}$. 
Let $F\subseteq B$ with $|F|=k$.  The smallest plane which contains $F$ has dimension at least~$j$.
Hence Lemma \ref{lem:cover} implies that 
\[
|\{T\in A(n, m): F\subseteq T\}|%=O(q^{n(m-d(F))})
   =O(q^{n(m-j)}).
\]
Since $|B|\leq r q^{m}=O(1)$, there are ${|B| \choose k}=O(1)$ different choices  for a subset $F$
consisting of
$k$ points of $B$. Therefore, by (\ref{eq:A(n,m)}),
\begin{equation*}
|J_{k}|=O(q^{n(m-j)})=O(q^{-n(j+1)})\,|A(n,m)|
\end{equation*}
and since $q, m$ are fixed, we obtain
\begin{equation}
\label{sumj}
\sum_{q^{j-1}<k\leq q^{j}} |J_{k} |= O(q^{-n(j+1)})\, |A(n,m)|.
\end{equation}
If $j \leq m-1$ then applying Lemma  \ref{lem:intersection}(iii) gives
\begin{align}
\sum_{j=1}^{m-1} \sum_{q^{j-1}<k\leq q^{j}}\, |J_{k}|\, \delta^{a-q^{j}} 
&=\sum_{j=1}^{m-1} O(q^{-n(j+1)}\, \delta^{-q^{j}})\,  |A(n,m)|\,  \delta^{a} \nonumber \\
&= o(1).  %O_{q,m,\lambda,k}(1).
\label{eq:thirdterm}
\end{align}
Finally, when $j=m$ we have, by (\ref{eq:A(n,m)}) and (\ref{sumj}),
%for every  $k\in \{q^{m-1}+1, \ldots, q^{m}\}$, recalling \eqref{eq:A(n,m)} that $|A(n, m)|=\Theta(q^{n(m+1)})$, we have 
%\[
%|J_{k}|=O(q^{-n(j+1)})\,|A(n,m)|=O(1).
%\]
%It follows that  
\begin{equation}
\sum_{q^{m-1}<k\leq q^{m}}\, |J_{k}| \delta^{a-k} \leq
\sum_{q^{m-1}<k\leq q^{m}}\, |J_{k}| = 
O(1).
\label{eq:m}
\end{equation}
The result follows by substituting (\ref{eq:0}), (\ref{eq:1}), (\ref{eq:thirdterm}) and (\ref{eq:m}) into (\ref{eq:split}).
\end{proof}

Applying  Lemma \ref{lem:condition} and arguing  in the proof of Theorem~\ref{thm:general}, we obtain that condition (C3) from Lemma~\ref{lem:high} holds for $m$-dimensional planes. Together with Lemma \ref{lem:intersection}(ii) and Lemma \ref{lem:yy}, we  obtain Lemma~\ref{lem:high} (C1) and (C2), completing the proof of Theorem~\ref{thm:main} for $m$-dimensional planes.

\bigskip

\noindent {\bf Extensions and limitations:}\ 
It is possible that our analysis of $m$-dimensional subspaces in Theorem~\ref{thm:main}
can be extended to allow $m$ or $q$ to grow slowly with $n$.
Bounding $|\A|$ more carefully in (\ref{eq:A(n,m)}) suggests that the threshold function 
should be adjusted to $t(n,q) = q^{-\frac{(m+1)(n-m)}{q^m}}$ in that case.
This function is very close to~1  unless $n$ grows faster than $\frac{q^m}{(m+1)\log q}$.

\section{Further results and future directions}\label{sec:further}

We now give some applications to extremal problems, discuss an Erd\H{o}s--R{\' e}nyi
variant of the random model and mention some open problems.

\subsection{Applications to extremal problems}

%Recall the definition of $E$ from Section~\ref{sec:introduction}.
Borrowing notation from extremal graph theory, let
$\operatorname{ex}(\F,\A)$ denote the  maximal cardinality of subsets of $\F$ which are $\A$ -free. 
That is,  
\[
\operatorname{ex}(\F,\A)=\max\{|B|: B \subseteq \F,\,\, B \text{ does not contain any element of $\A$ }\}.
\]
For the random variable $X$, Markov's inequality implies that 
\begin{equation}\label{eq:markov}
\PP(X\geq 1)\leq \EE(X)=|\A|\, \delta^{a}.
\end{equation}
Chebyshev's inequality implies that for our random subset $E$,   
\begin{equation}\label{eq:chebyshev}
\PP\big(| | E| - q^{n}\delta |\geq 
    \dfrac{1}{2}q^{n}\delta\big)\leq \frac{4q^{n}\delta(1-\delta)}{(q^{n}\delta)^{2}} 
             = \frac{4(1-\delta)}{q^n \delta}.
\end{equation}
That is, $|E|$ is concentrated around $q^n \delta$ when $q^n\delta$ is large.

Applying the estimates \eqref{eq:markov} and \eqref{eq:chebyshev} leads to the following lower bound on
$\operatorname{ex}(\F,\A_{a})$. 

\begin{lemma} Suppose that $|\A| \delta^{a}= \nfrac{1}{2}$  and $q^{n}\delta\geq 100$. Then there exists a subset 
$S\subseteq \mathbb{F}_q^n$ which is $\A$-free and satisfies $|S| =\Theta(q^{n}\delta)$. This implies that 
\[
\operatorname{ex}(\F, \A)=\Omega(q^{n}\delta)=\Omega(q^{n}|\A|^{-\frac{1}{a}}).
\]
\end{lemma}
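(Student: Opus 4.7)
The plan is to apply the deletion (alteration) method using the two moment estimates displayed immediately before the statement. The random subset $E = E^{\omega}$ drawn from $\Omega(\F, \delta)$ is itself $\A$-free with non-trivial probability whenever $\EE(X) = |\A|\delta^{a}$ is bounded by a small constant, while its cardinality concentrates around $q^{n}\delta$ when $q^{n}\delta$ is large. I therefore expect that a single realisation of $E$ will simultaneously avoid every pattern in $\A$ and have size of order $q^{n}\delta$; taking this realisation as $S$ will yield the lemma.

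Concretely, I would first invoke Markov's inequality in the form \eqref{eq:markov} with $|\A|\delta^{a} = \tfrac{1}{2}$ to obtain $\PP(X \geq 1) \leq \tfrac{1}{2}$, or equivalently $\PP(X = 0) \geq \tfrac{1}{2}$. Next, the Chebyshev bound \eqref{eq:chebyshev} combined with $q^{n}\delta \geq 100$ gives
\[\PP\bigl(\tfrac{1}{2} q^{n}\delta \leq |E| \leq \tfrac{3}{2} q^{n}\delta\bigr) \geq 1 - \tfrac{4(1-\delta)}{q^{n}\delta} \geq 1 - \tfrac{4}{100}.\]
A union bound then yields
\[\PP\bigl(X = 0 \text{ and } \tfrac{1}{2} q^{n}\delta \leq |E| \leq \tfrac{3}{2} q^{n}\delta\bigr) \geq 1 - \tfrac{1}{2} - \tfrac{4}{100} > 0,\]
so some realisation $E$ is simultaneously $\A$-free and of cardinality $\Theta(q^{n}\delta)$. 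Declaring $S$ to be such a realisation gives a set of the asserted size, and by the definition of $\operatorname{ex}(\F, \A)$ this immediately forces $\operatorname{ex}(\F, \A) \geq |S| = \Omega(q^{n}\delta)$.

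For the second expression in the conclusion, I would simply rewrite $\delta$ using the defining condition $|\A|\delta^{a} = \tfrac{1}{2}$: solving gives $\delta = (2|\A|)^{-1/a} = \Theta(|\A|^{-1/a})$, and substituting into $\Omega(q^{n}\delta)$ produces $\Omega(q^{n}|\A|^{-1/a})$, as required.

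I do not anticipate any substantive obstacle: both moment estimates are already written down in the excerpt, and the numerical constants in the hypotheses ($\tfrac{1}{2}$ for the first moment, $100$ in the Chebyshev denominator) are tuned precisely so that the two ``bad'' events have combined probability strictly less than $1$. The only point requiring a moment's care is that the two good events must occur for the \emph{same} realisation of $E$, which is handled transparently by the union bound above.
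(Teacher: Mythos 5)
Your proof is correct and is exactly the argument the paper intends: the lemma is stated immediately after equations \eqref{eq:markov} and \eqref{eq:chebyshev} precisely so that Markov gives $\PP(X\geq 1)\leq \tfrac12$, Chebyshev gives concentration of $|E|$, and a union bound produces a single realisation that is $\A$-free of size $\Theta(q^n\delta)$. (One cosmetic remark: you announce the ``deletion/alteration method,'' but no deletion is actually performed or needed here --- the hypothesis $|\A|\delta^a=\tfrac12$ is tuned so that $E$ itself is $\A$-free with probability at least $\tfrac12$.)
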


This leads to the following lower bounds for the extremal problem for the patterns defined
in Definition~\ref{patterns}.
%For the configurations which we consider in the paper, we have the following lower bound estimates. 

\begin{corollary}
As usual, let $q\geq 2$ be a prime power and let $n$ be a positive integer.
Let $\A_{1}$, $\A_2$ and $\A_{3}$ be the collection of all non-trivial $k$-APs, sums and non-trivial parallelograms in $\mathbb{F}_q^n$, respectively.
%\csg{[Should we say ``non-trivial $k$-APs'', ``non-trivial parallelograms''?]}\cc{[CC: I added these non-triviala]}
Similarly, let $\A_4$ denote the collection of all non-trivial right triangles 
in $\mathbb{F}_q^n$, where $n\geq 2$, and let 
$\A_5$ denote the collection of all %the non-trivial 
$m$-dimensional planes in $\mathbb{F}_q^n$,
where $m,q$ are fixed and $q\geq 3$. Then 
\begin{align*}
\operatorname{ex}(\F,\A_{1})=\Omega(q^{n(1-\frac{2}{k})}), & \qquad \operatorname{ex}(\F,\A_{2})=\Omega(q^{\frac{n}{3}}), \qquad \operatorname{ex}(\F,\A_{3})=\Omega(q^{\frac{n}{4}}),\\
\operatorname{ex}(\F,\A_{4})=\Omega(q^{\frac{1}{3}}), &\qquad \operatorname{ex}(\F,\A_{5})=\Omega\left(q^{n(1- \frac{m+1}{q^{m}})}\right).
\end{align*}
\end{corollary}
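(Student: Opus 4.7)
The plan is to apply the preceding lemma separately to each of the four families $\A_{1},\ldots,\A_{4}$, plugging in the cardinality estimates established earlier in the paper. From the preceding lemma, $\operatorname{ex}(\F,\A)=\Omega(q^{n}|\A|^{-1/a})$ provided that a valid $\delta\in(0,1)$ exists with $|\A|\delta^{a}=\nfrac{1}{2}$ and $q^{n}\delta\geq 100$. Solving the first equation gives $\delta=(2|\A|)^{-1/a}$, and the second then becomes $q^{n}|\A|^{-1/a}\geq 100\cdot 2^{1/a}$. So the hypotheses of the lemma are satisfied once $q^{n}|\A|^{-1/a}$ is sufficiently large, which will follow automatically from the asymptotic regimes specified in Definition~\ref{def:def}, together with the cardinality estimates below.

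Next I would substitute the sizes of each family. For $\A_{1}$ (3-APs), the table gives $a=3$, $b=2$, $c=0$, so $|\A_{1}|=\Theta(q^{2n})$ and $q^{n}|\A_{1}|^{-1/3}=\Theta(q^{n/3})$. For $\A_{2}$ (parallelograms), $a=4$, $b=3$, $c=0$, so $|\A_{2}|=\Theta(q^{3n})$ and $q^{n}|\A_{2}|^{-1/4}=\Theta(q^{n/4})$. For $\A_{3}$ (right triangles), $a=3$, $b=3$, $c=1$, so $|\A_{3}|=\Theta(q^{3n-1})$, giving $q^{n}|\A_{3}|^{-1/3}=\Theta(q^{1/3})$. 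For $\A_{4}$ ($m$-dimensional planes), $a=q^{m}$ and by \eqref{eq:A(n,m)} we have $|\A_{4}|=\Theta_{q,m}(q^{(m+1)n})$, which yields
\[
q^{n}|\A_{4}|^{-1/q^{m}}=\Theta_{q,m}\bigl(q^{n(1-(m+1)/q^{m})}\bigr).
\]

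The main obstacle, modest as it is, is verifying that the two conditions of the preceding lemma can be met simultaneously under the prescribed asymptotic regime. For $\A_{1}$ and $\A_{2}$ the relevant quantity is $q^{n/3}$ or $q^{n/4}$, which tends to infinity as $q+n\to\infty$ (outside a finite set of degenerate small cases absorbed into the implicit constant). For $\A_{3}$, we need $q^{1/3}\to\infty$, which is the stipulated asymptotic regime $q\to\infty$ with $n\geq 2$. For $\A_{4}$, we need $n(1-(m+1)/q^{m})\to\infty$; since $q,m$ are fixed and $n\to\infty$, it suffices to check that $q^{m}>m+1$, and this is exactly where the hypothesis $q\geq 3$ is used (it ensures the exponent is strictly positive for every $m\geq 1$). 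In each case the bound from the preceding lemma then matches the statement of the corollary, completing the proof.
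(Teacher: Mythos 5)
Your proposal is correct and follows exactly the route the paper intends: the corollary is a direct application of the preceding lemma with $\delta=(2|\A|)^{-1/a}$, combined with the cardinality estimates $|\A_1|=\Theta(q^{2n})$, $|\A_2|=\Theta(q^{3n})$, $|\A_3|=\Theta(q^{3n-1})$ and $|\A_4|=\Theta_{q,m}(q^{(m+1)n})$ already established in Section~3. The paper leaves this computation implicit, and your verification of the side condition $q^{n}\delta\geq 100$ in each asymptotic regime (including the role of $q\geq 3$ in making the exponent $1-\frac{m+1}{q^{m}}$ positive) fills in precisely the details the authors omitted.
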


Note that the extremal problem for  $3$-APs in $\mathbb{F}_{3}^{n}$ is called the cap set problem, and a much stronger lower bound is known. For the cap set problem, the best known bounds are  
\[
2.2^{n}\leq \operatorname{ex}(\mathbb{F}_{3}^{n}, \text{ 3-AP} )\leq 2.756^{n}.
\]    
See \cite{Edel} for the lower bound and \cite{EG} for the upper bound (and
further background).
To our knowledge the other lower bounds appear to be new.  
%\cg{As mentioned earlier, parallelogram-free sets are the analogue of Sidon sets
%in $\F$.}
%\csg{[Does that mean that we should emphasise $\operatorname{ex}(\F,\A_2)$ more???!!]}

\subsection{Erd{\H o}s-R\'enyi model for finite vector spaces}

In this subsection, we consider another random model in $\F$ which is an  analogue of Erd\H{o}s-R\'enyi random graphs. 
%First of all we give the definition of this model. 
Let $M=M_{n,q}\leq q^{n}$ be a positive integer.  Choose $E=E^{\omega}$ uniformly at random from the set of all subsets of $\F$ of cardinality $M$. Denote this probability space by $\Omega(\F, M)$.

Note that for a subset $F\subseteq \F$ with $|F|\leq M$, we have 
\begin{equation}\label{eq:ppp}
\PP(F \subseteq E)=\frac{M(M-1)\ldots(M-|F|+1)}{q^{n}(q^{n}-1)\ldots(q^{n}-|F|+1)}.
\end{equation}
It follows that if $|F|=O(1)$ and $M_{n,q}\rightarrow \infty$ then the identity \eqref{eq:ppp} becomes 
\begin{equation*}
\PP(F\subseteq E)\sim \left(\frac{M_{n,q}}{q^{n}}\right)^{|F|}.
\end{equation*}
For the model $\Omega(\F,M)$, we can obtain  similar results to Theorems \ref{thm:main} %and  \ref{thm:Poisson}  
by taking the ratio $M_{n,q}/q^{n}$ instead of $\delta$ in our former proofs.  We omit these arguments.

\subsection{Future directions}\label{s:future}

%We thanks the referee for suggesting the below  further two research topics. This could be the future works, and we show the problems out only in this paper.

Our results leave many open questions in the analysis of substructures in 
random subsets of $\F$.
Some possible directions for future study are listed below.

\begin{itemize}
\item  
To what extent can condition (\ref{eq:10}) in Theorem~\ref{thm:general}
be relaxed or removed?  
Recall that in random graphs, the threshold for existence of a given
subgraph $G$ is controlled by the parameter $m(G)$, which is the
density of the densest subgraph of $G$.  To remove condition (\ref{eq:10}) 
it may be necessary to
define an analogous parameter for substructures in $\F$.

\item

We have only analysed the limiting distribution of the random variable $X$,
defined in~\eqref{eq:x}, in the case that $\EE X$ tends to a constant
(that is, within the threshold scale).  
In this situation we proved that the limiting distribution is Poisson (in some cases 
we could prove this only when $q\to\infty$).  
A natural question is to ask for conditions under which the
limiting distribution of $X$ is normal (see~\cite{R} for the random graph setting).
This would involve application of the method of moments, or Stein's method.

%\item
%\cg{For a given pattern $\A$, the Ramsey problem
%corresponding to $\A$ asks for the smallest
%value of $N$ such that in any $n\geq N$, and in any colouring of $\F$
%using $r$ colours,  there exists a monochromatic element of $\A$.
%}
\item
A fruitful line of research in recent years (for example~\cite{CG,KLR,RR,S}),
 seeks what Conlon and Gowers~\cite{CG} call ``sparse random'' analogues of extremal 
and Ramsey-theoretic problems.
In our setting, let $\A$ be a given pattern and recall that $E$ is a random
subset of $\F$ chosen according to $\Omega(\F,\delta)$.
One open problem is to find a threshold function 
for the property that, in every $r$-colouring of $E$, there is a monochromatic 
element of $\A$.  
%}
%\item
%\cg{Finally, it would also be very interesting to investigate random versions of 
%extremal problems and Ramsey problems,
%analogous to the results of
%in our setting of a random subset $E$ of $\F$.  }
Another aim is to establish, for a given pattern $\A$,
a threshold function for the probability that 
%consider the probability that $\alpha$-$\mathcal{A}$, where 
every subset $F\subseteq E$ with
$|F|\geq \alpha |E|$ must contain an element of $\A$.

%First of all we show a result of  \cite{KLR} in the following to show out the idea. To be precise, let $[N]=\{1, 2, \ldots, N\}$ and $0<\delta<1$.  In a similar way as the random model  $\Omega(\F, \delta)$, that is each element of $[N]$ is chosen with probability $\delta$ independently, we denote  $[N]_\delta$  the random subset of $[N]$. Moreover, for $0<\alpha<1$ a subset $A\subseteq [N]$ is \emph{$\alpha$-3AP} if every subset $F\subseteq A$ with the size larger than  $\alpha |A|$ contains a three-term arithmetic progression. 
%\begin{theorem}{\rm(\cite{KLR})}\label{thm:Roth}
%For each $0<\alpha \le 1$ there exists positive  constants $c, C $ such that 
%\begin{equation*}
%\lim_{N\rightarrow \infty} \mathbb{P}([N]_\delta \text{ is $\alpha$-Roth })=
  %\begin{cases}
  %0 & \delta\le cN^{-1/2}\\
   %1  & \delta \ge CN^{-1/2}.
  %\end{cases}
%\end{equation*}
%\end{theorem}
%We remark that the  original proof in \cite{KLR} relied on the Szemer\' edi regularity lemma. Later there exists a Fourier-analytic approach, see \cite[Section 10.2]{TaoVu} for more details. 
%Similarly to the definition of $\alpha$-3AP in the above, we could also define $\alpha$-parallelogram, $\alpha$-right triangle, etc. Then it is natural to ask what is the threshold functions for these $\alpha$-patterns.   
\end{itemize}

\subsection*{Acknowledgements}\ The authors would like to thank the referees for their
extremely helpful comments which have greatly improved this paper.
 %especially for pointing out connections with the literature
%and suggesting directions for future research. 

\end{document}